\numberwithin{equation}{section}
\theoremstyle{plain}
\newtheoremstyle{myremark}{10pt}{10pt}{}{}{\bfseries}{.}{.5em}{}
 \newtheorem{thm}{Theorem}
 \newtheorem{lemma}{Lemma}[section]
 \theoremstyle{definition}
\begin{document}

\title[Weighted fractional Hardy inequalities]{Weighted fractional Hardy inequalities with singularity on any flat submanifold}

\author{VIVEK SAHU}
\address{ Department of Mathematics and Statistics,
Indian Institute of Technology Kanpur, Kanpur - 208016, Uttar Pradesh, India}

\email{viveksahu20@iitk.ac.in, viiveksahu@gmail.com}

\subjclass[2020]{ 46E35 (Primary); 26D15 (Secondary)}

\keywords{weighted fractional Sobolev spaces; fractional Hardy inequality; critical cases.}

\date{}

\dedicatory{}

\begin{abstract}
 We extend the work of Dyda and Kijaczko by establishing the corresponding weighted fractional Hardy inequalities  with singularities on any flat submanifolds. While they derived weighted fractional Hardy inequalities with singularities at a point and on a half-space, we generalize these results to handle singularities on any flat submanifold of codimension $k$, where $1<k<d$. Furthermore, we also address the critical case $sp=k+\alpha+ \beta$ and establish weighted fractional Hardy inequality with appropriate logarithmic weight function.
\end{abstract}

\maketitle

%---------------------------INTRODUCTION---------------------------------

\section{Introduction}
Fractional Hardy inequality is a fundamental tool in areas such as mathematical physics, the analysis of linear and nonlinear partial differential equations, and harmonic analysis. The inequality is stated as follows (see \cite{frank2008}): let $d \geq 1$ and $s \in (0,1)$. For any function $u \in W^{s,p}(\mathbb{R}^{d})$ when $1 \leq p \leq \frac{d}{s}$, and for any $u \in W^{s,p}(\mathbb{R}^{d} \backslash \{ 0 \})$ when $p > \frac{d}{s}$, the following inequality holds for a optimal constant $C=C(d,s,p)>0$,
\begin{equation}
    \int_{\mathbb{R}^{d}} \frac{|u(x)|^{p}}{|x|^{sp}} dx \leq C \int_{\mathbb{R}^{d}}\int_{\mathbb{R}^{d}} \frac{|u(x)-u(y)|^{p}}{|x-y|^{d+sp}}  dxdy =: [u]^{p}_{W^{s,p}(\mathbb{R}^{d})}.
\end{equation}

\smallskip

In this article, we investigate the Hardy-type inequalities in the context of weighted fractional Sobolev space with singularity on any flat submanifold. We define the weighted fractional Sobolev space as follows:  let $s \in (0,1), ~ p \geq 1$ and $\alpha, \beta \in \mathbb{R}$. For any $x \in \mathbb{R}^{d}$, we denote $x= (x_{k}, x_{d-k})$, where $1 \leq k < d, ~ k \in \mathbb{N}, ~ x_{k} \in \mathbb{R}^{k}$ and $x_{d-k} \in \mathbb{R}^{d-k}$. In the special case $k=d$, this notation refers to $x=(x,0)$. The weighted Gagliardo seminorm for a fixed $1\leq k \leq d$ is then defined as
\begin{equation*}
     [u]_{W^{s,p, \alpha, \beta}(\mathbb{R}^{d})} := \left( \int_{\mathbb{R}^{d}} \int_{\mathbb{R}^{d}}  \frac{|u(x)-u(y)|^{p}}{|x-y|^{d+sp}} |x_{k}|^{\alpha} |y_{k}|^{\beta} dxdy \right)^{\frac{1}{p}},
\end{equation*}
and the corresponding weighted fractional Sobolev space is given by
\begin{equation}
    W^{s,p, \alpha, \beta}(\mathbb{R}^{d}) := \left\{  u \in L^{p}(\mathbb{R}^{d}) : [u]_{W^{s,p, \alpha, \beta}(\mathbb{R}^{d})} < \infty   \right\}.
\end{equation}
For convention, when $\alpha=\beta=0$, we denote $[u]_{W^{s,p, 0, 0}(\mathbb{R}^{d})} := [u]_{W^{s,p}(\mathbb{R}^{d})} $ and refer to the space $W^{s,p, 0, 0}(\mathbb{R}^{d})= W^{s,p}(\mathbb{R}^{d})$ as the usual fractional Sobolev space.

\smallskip

Dyda and Kijaczko in \cite{dyda2022sharp} (see, also \cite{dyda2024}) studied weighted fractional Hardy inequalities corresponding to a singularity at a point (say origin), and established the following result: let $s \in (0,1), ~ p \geq 1$ and $\alpha, ~ \beta, ~ \alpha+ \beta \in (-sp,d)$. For all $u \in C_{c}(\mathbb{R}^{d})$ when $sp+ \alpha+ \beta <d$, and for all $u \in C_{c}(\mathbb{R}^{d} \backslash \{ 0 \})$ when $sp+ \alpha+ \beta >d$, the following inequality holds for a optimal constant $C>0$,
\begin{equation}
    \int_{\mathbb{R}^{d}} \frac{|u(x)|^{p}}{|x|^{sp+\alpha+\beta}} dx \leq C\int_{\mathbb{R}^{d}}\int_{\mathbb{R}^{d}} \frac{|u(x)-u(y)|^{p}}{|x-y|^{d+sp}} |x|^{-\alpha} |y|^{-\beta} dxdy.
\end{equation}
Furthermore, in the same work, they addressed the case $k=1$, which corresponds to a singularity on a hyperplane (say $\{x= (x_{d-1}, x_{d}) \in \mathbb{R}^{d-1} \times \mathbb{R} : x_{d}=0 \}$) and they proved the following result: let $\alpha, ~ \beta, ~ \alpha+ \beta \in (-1,sp)$ and $1+ \alpha + \beta \neq sp$. Then for all $u \in C_{c}(\mathbb{R}^{d}_{+})$ with a optimal constant $C>0$,
\begin{equation}
    \int_{\mathbb{R}^{d}_{+}} \frac{|u(x)|^{p}}{x^{sp-\alpha-\beta}_{d}} dx \leq C \int_{\mathbb{R}^{d}_{+}}\int_{\mathbb{R}^{d}_{+}} \frac{|u(x)-u(y)|^{p}}{|x-y|^{d+sp}} x^{\alpha}_{d} y^{\beta}_{d} dxdy.
\end{equation}
The critical case ~$1+ \alpha + \beta =sp$ was addressed in our recent work \cite[Theorem 6]{adimurthi2024fractional}, where we identified the appropriate logarithmic weight function to establish the weighted fractional Hardy inequality with singularity on a hyperplane. For recent advancements in fractional Hardy-type inequalities, we refer to \cite{adimurthi2024boundaryfractionalhardysinequality, brasco2024, brasco2018, csato, Cabre2022, Dyda2004, dyda2022, dyda2022sharp, dyda2018, frank2008, Frank2010, leonibook, squassina2018, firoz}.
%For additional literature on Hardy-type inequalities, we refer to ~\cite{adi2002, adi2009, tertikas2003, debdip2020, Brezis1997, Brezis19971, sandeep2008, lu2024, delpino, lu2022, lu20223, lulam, Nguyen2019}.
\smallskip

Our aim in this article is to extend the above results to address singularities on any flat submanifold of codimension $k$, where $1<k<d, ~ k \in \mathbb{N}$. In particular, we consider the flat submanifold  
\begin{equation}\label{k defn}
   K:= \{x= (x_{k}, x_{d-k}) \in \mathbb{R}^{k} \times \mathbb{R}^{d-k} : x_{k} =0 \},
\end{equation} with $1<k<d$ and establish weighted fractional Hardy inequalities with singularity on $K$. The following theorem establishes our main results for the case $sp \neq k+ \alpha + \beta$, where $\alpha, \beta \in \mathbb{R}$ such that $sp-\alpha-\beta>0$. In particular we prove the following theorem:

\begin{thm}\label{theorem1}
Let $d \geq 3, ~ p>1, ~ s \in (0,1)$ and $1<k<d, ~ k \in \mathbb{N}$. Assume $\alpha, \beta \in \mathbb{R}$ are such that $sp-\alpha-\beta>0$ and $sp \neq k+ \alpha+ \beta$. Let $K$ be a flat submanifold defined in \eqref{k defn}. Then, there exists a constant $C=C(d,p,s,k,\alpha, \beta)>0$ such that
     \begin{equation}
        \int_{\mathbb{R}^{d}} \frac{|u(x)|^{p}}{|x_{k}|^{sp-\alpha-\beta}} dx \leq C \int_{\mathbb{R}^{d}} \int_{\mathbb{R}^{d}} \frac{|u(x)-u(y)|^{p}}{|x-y|^{d+sp}} |x_{k}|^{\alpha} |y_{k}|^{\beta} dxdy, \hspace{3mm} \forall \ u \in C^{1}_{c}(\mathbb{R}^{d} \backslash K).
    \end{equation}
\end{thm}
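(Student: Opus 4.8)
The plan is to reduce the problem to known one-dimensional (or low-codimension) weighted Hardy inequalities by slicing $\mathbb{R}^d$ along the directions transverse to $K$. Write $x=(x_k,x_{d-k})\in\mathbb{R}^k\times\mathbb{R}^{d-k}$, and note that the weights $|x_k|^\alpha$, $|y_k|^\beta$ and the potential $|x_k|^{-(sp-\alpha-\beta)}$ depend only on the radial variable $r=|x_k|$ in the first block. A natural first step is therefore to integrate out the $x_{d-k}$ variable and the angular part of $x_k$ and compare the resulting quantities; more precisely, I would try to show that it suffices to prove the inequality for functions of the special form $u(x)=v(|x_k|)w(x_{d-k})$, or even to reduce to the genuinely radial-in-$x_k$ case, and then invoke the point-singularity result of Dyda–Kijaczko (inequality (1.3) in the excerpt) applied in dimension $k$ with singularity at the origin of $\mathbb{R}^k$.

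Concretely, the key steps I would carry out are: (1) Fix $x_{d-k}=y_{d-k}$ is \emph{not} valid since the double integral couples the two transverse blocks, so instead I would use the elementary bound $|x-y|^{d+sp}\ge |x_k-y_k|^{?}\,|x_{d-k}-y_{d-k}|^{?}$ only with care; the cleaner route is to restrict the $y$-integration to a suitable cone or slab around $x$ so that $|x-y|\approx |x_k|$ is controlled, mimicking the standard ``Hardy via a well-chosen averaging kernel'' argument. (2) Use the fact that on $\mathbb{R}^d\setminus K$, for $u\in C^1_c$, the function is smooth across the removed set's complement, and build a comparison function (e.g.\ an average of $u$ over a ball of radius comparable to $|x_k|$ centered appropriately) to split $|u(x)|$ into a piece controlled by the seminorm and a piece controlled by a lower-dimensional Hardy inequality. (3) Treat the two regimes $sp<k+\alpha+\beta$ and $sp>k+\alpha+\beta$ separately, since the direction of the ``extra'' integration and the role of $C^1_c(\mathbb{R}^d\setminus K)$ differ — in the supercritical regime the removal of $K$ is essential and the argument should parallel the $p>d/s$ case of the classical inequality.

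Alternatively — and this is probably the approach the author takes — one proves the inequality directly by the Dyda–Kijaczko method: choose a nonnegative kernel, show the ``ground state representation'' / pointwise inequality
\begin{equation*}
    \frac{|u(x)|^p}{|x_k|^{sp-\alpha-\beta}} \le C\int_{\mathbb{R}^d}\frac{|u(x)-u(y)|^p}{|x-y|^{d+sp}}\,|y_k|^\beta\,dy \cdot |x_k|^{-\alpha} + (\text{a term that integrates to something controlled}),
\end{equation*}
after multiplying by $|x_k|^\alpha$ and integrating in $x$. The technical heart is the estimate of the integral $\int_{\mathbb{R}^d} |x-y|^{-d-sp}|y_k|^\beta\,dy$ and its companion with $|x-y|^{-d-sp}$ replaced by a truncated version, which in codimension $k$ requires the convergence conditions $sp-\alpha-\beta>0$ (for integrability near $y_k=0$ after using $|x_k|$-scaling) and $sp\ne k+\alpha+\beta$ (to avoid the borderline logarithmic divergence, exactly the critical case excluded here). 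I expect the main obstacle to be precisely this codimension-$k$ kernel computation: unlike the $k=1$ hyperplane case, the transverse slice is $k$-dimensional, so the relevant one-variable integrals become integrals over $\mathbb{R}^k$ in polar coordinates, and one must track how the exponent $k$ enters the convergence thresholds and how the constant degenerates as $sp\to k+\alpha+\beta$. Carrying the $C^1_c(\mathbb{R}^d\setminus K)$ hypothesis through the cutoff/approximation needed to justify the pointwise inequality on $K$ is the second delicate point, handled by the regularity of $u$ away from $K$ together with the condition $sp-\alpha-\beta>0$ ensuring the left-hand side is finite.
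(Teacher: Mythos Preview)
Your proposal is a brainstorm of two plausible strategies rather than a proof, and neither matches what the paper does. The paper does \emph{not} slice and reduce to the Dyda--Kijaczko point result in $\mathbb{R}^k$, nor does it use a ground-state/supersolution representation with a kernel computation. Instead it runs a dyadic chaining argument: for $u\in C^1_c(\mathbb{R}^d\setminus K)$ with support in a box $\mathcal D$, it decomposes $\mathcal D$ into annular shells $\mathcal B_\ell=\{2^\ell\le |x_k|<2^{\ell+1}\}$ and further into pieces $\mathcal B^i_\ell$ that are annulus-times-cube of scale $2^\ell$. On each piece a scaled fractional Poincar\'e inequality (Lemma~\ref{sobolev}) gives $\int_{\mathcal B^i_\ell}|u|^p|x_k|^{-(sp-\alpha-\beta)}\,dx \le C[u]^p_{W^{s,p,\alpha,\beta}(\mathcal B^i_\ell)}+C\,2^{\ell(d-sp+\alpha+\beta)}|(u)_{\mathcal B^i_\ell}|^p$. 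The average terms are then chained across levels: Lemma~\ref{est2} controls $|(u)_{\mathcal B^i_\ell}-(u)_{\mathcal B^j_{\ell+1}}|^p$ by a scaled seminorm, and the elementary inequality $(|a|+|b|)^p\le c|a|^p+(1-c^{-1/(p-1)})^{1-p}|b|^p$ with $c=c_1 2^{|k-sp+\alpha+\beta|}$ is used to telescope the averages from level $\ell$ to $\ell\pm 1$ with a geometric gain, the direction depending on the sign of $sp-(k+\alpha+\beta)$. The boundary terms vanish because $u$ has compact support away from $K$, and the seminorm pieces sum with bounded overlap (Lemma~\ref{lemma 3}). The condition $sp\ne k+\alpha+\beta$ is exactly what makes the geometric factor $c_1<1$ so the telescoped sum closes; the condition $sp-\alpha-\beta>0$ enters when bounding $|x_k|^{-(sp-\alpha-\beta)}$ by $2^{-\ell(sp-\alpha-\beta)}$ on $\mathcal B^i_\ell$.

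Your second sketch (the Dyda--Kijaczko supersolution method) is a legitimate alternative and would likely yield the \emph{sharp} constant, which the paper's chaining argument does not; but you would need to actually compute $\int_{\mathbb{R}^d}|x-y|^{-d-sp}|y_k|^\gamma\,dy$ for the right $\gamma$, verify the resulting algebraic identity in codimension $k$, and carry the integration-by-parts/rearrangement through --- none of which you have done. Your first sketch (slice and reduce to $\mathbb{R}^k$) is harder than you suggest: the identity $\int_{\mathbb{R}^{d-k}}(|\xi|^2+|z|^2)^{-(d+sp)/2}\,dz=c\,|\xi|^{-k-sp}$ lets you recover the $k$-dimensional kernel from the $d$-dimensional one only after integrating in $y_{d-k}$, but the increment $|u(x_k,x_{d-k})-u(y_k,y_{d-k})|$ does not reduce to a slice increment, so an additional telescoping in the transverse variable would be needed and you have not indicated how to close it. As written, your proposal contains no step that can be checked, and your guess about the author's method is incorrect.
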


\smallskip

The case $sp=k+ \alpha+ \beta$, where $1<k<d$ and $\alpha, \beta \in \mathbb{R}$ is considered critical. In our recent work in \cite{adimurthiSubmanifold}, we addressed this critical case with $\alpha=\beta=0$ and established the fractional Hardy inequality with singularities on smooth compact sets of codimension $k$, where $1<k<d$ in a bounded domain, incorporating an optimal logarithmic weight function. Motivated by these results, we now derive the corresponding weighted fractional Hardy inequality with a logarithmic weight function for the critical case $sp = k + \alpha + \beta$. Let $B^{k}(0, R)$ denote a ball of radius $R$ centered at $0$ in $\mathbb{R}^{k}$. With this framework, we present the following theorem for the critical case:

\begin{thm}\label{theorem2}
    Let $d \geq 3, ~ p>1, ~ s \in (0,1)$ and $1<k<d, ~ k \in \mathbb{N}$. Assume $\alpha, \beta \in \mathbb{R}$ are such that $sp = k + \alpha + \beta$. Let $K$ be a flat submanifold defined in \eqref{k defn}.  Then, there exists a constant $C=C(d,s,p, \alpha, \beta)>0$ such that for any  $u \in C^{1}_{c}(\mathbb{R}^{d} \backslash K)$ satisfying $supp(u) \subset B^{k}(0, R) \times \mathbb{R}^{d-k}$ for some $R>0$,
   \begin{equation}
        \bigintsss_{\mathbb{R}^{d}} \frac{|u(x)|^{p}}{|x_{k}|^{k} \ln^{p} \left( \frac{4R}{|x_{k}|}  \right) }  dx \leq C \int_{\mathbb{R}^{d}} \int_{\mathbb{R}^{d}} \frac{|u(x)-u(y)|^{p}}{|x-y|^{d+sp}} |x_{k}|^{\alpha} |y_{k}|^{\beta} dxdy.
   \end{equation}
\end{thm}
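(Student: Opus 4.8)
The plan is to adapt the arguments used for the two critical cases that are already known --- codimension $k=1$ in \cite[Theorem 6]{adimurthi2024fractional} and $\alpha=\beta=0$ in \cite{adimurthiSubmanifold} --- to a general flat submanifold of codimension $1<k<d$ carrying a two-weight Gagliardo energy. I first make the usual reductions. Since $\bigl||u(x)|-|u(y)|\bigr|\le|u(x)-u(y)|$, replacing $u$ by $|u|$ leaves the left-hand side unchanged and does not increase the right-hand side, so I may assume $u\ge 0$. Relabelling $x\leftrightarrow y$ in the double integral gives
\begin{equation*}
\int_{\mathbb{R}^d}\!\int_{\mathbb{R}^d}\frac{|u(x)-u(y)|^{p}}{|x-y|^{d+sp}}|x_k|^{\alpha}|y_k|^{\beta}\,dx\,dy=\frac12\int_{\mathbb{R}^d}\!\int_{\mathbb{R}^d}\frac{|u(x)-u(y)|^{p}}{|x-y|^{d+sp}}\bigl(|x_k|^{\alpha}|y_k|^{\beta}+|x_k|^{\beta}|y_k|^{\alpha}\bigr)\,dx\,dy,
\end{equation*}
so I may work with the symmetric weight $\Theta(x,y):=|x_k|^{\alpha}|y_k|^{\beta}+|x_k|^{\beta}|y_k|^{\alpha}$, which satisfies $\Theta(x,y)\sim r^{\alpha+\beta}$ whenever $|x_k|\sim|y_k|\sim r$. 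Finally, if $\alpha\ge sp$ or $\beta\ge sp$ then $k+\beta\le 0$ (resp. $k+\alpha\le 0$), and restricting the right-hand integral to the region where one of the two points lies outside $\{|\cdot_k|<R\}$, i.e. where the corresponding value of $u$ vanishes, shows the right-hand side is $+\infty$; hence I may assume $\alpha<sp$ and $\beta<sp$, which also guarantees the tail integrals below converge.

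The core is a scale-by-scale ("superlevel set") estimate. Fix $x$ with $|x_k|=r\in(0,R)$ and $u(x)>0$, and set $\Omega_x:=\{y:\ u(y)\le u(x)/2\}$; on $\Omega_x$ one has $u(x)-u(y)\ge u(x)/2$, so
\begin{equation*}
\int_{\mathbb{R}^d}\!\int_{\mathbb{R}^d}\frac{|u(x)-u(y)|^{p}}{|x-y|^{d+sp}}\Theta(x,y)\,dx\,dy\ \gtrsim\ \int_{\mathbb{R}^d}|u(x)|^{p}\,|x_k|^{\alpha}\,\Phi(x)\,dx,\qquad \Phi(x):=\int_{\Omega_x}\frac{|y_k|^{\beta}}{|x-y|^{d+sp}}\,dy.
\end{equation*}
Integrating out the $d-k$ transversal variables via $\int_{\mathbb{R}^{d-k}}(|a|^{2}+|z|^{2})^{-(d+sp)/2}\,dz=c_{d,k,s,p}\,|a|^{-(k+sp)}$ turns $\Phi(x)$ into (a weighted version of) the $k$-dimensional integral of $|y_k|^{\beta}|x_k-y_k|^{-(k+sp)}$ over the part of $\Omega_x$ that projects near $x_k$; since $\beta-sp=-(k+\alpha)$, whenever $\Omega_x$ captures a fixed proportion of each dyadic annulus $\{2^{\ell-1}r\le|y_k|\le R\}$ one gets $\Phi(x)\gtrsim r^{-(k+\alpha)}$, i.e. $|x_k|^{\alpha}\Phi(x)\gtrsim|x_k|^{-k}$ --- which is stronger than the claimed bound, with no logarithm. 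Thus the only problematic ("bad") $x$ are those for which $u$ stays above $u(x)/2$ across a long run of scales between $r$ and $R$; equivalently, the dyadic averages $\bar u_i(x):=|Q_i(x)|^{-1}\int_{Q_i(x)}u$ over balls $Q_i(x)$ of radius $\sim 2^{-i}R$ sitting at distance $\sim 2^{-i}R$ from $K$ along the ray through $x_k$ are all comparable to $u(x)$ down to the smallest scale $2^{-j}R\sim r$.

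For such bad $x$ I would use the telescoping identity $u(x)=\bigl(u(x)-\bar u_j(x)\bigr)+\sum_{i=0}^{j}\bigl(\bar u_i(x)-\bar u_{i-1}(x)\bigr)$, valid because $\bar u_{-1}(x)=0$ (the coarsest ball lies in $\{|y_k|\ge R\}$, where $u\equiv0$); each increment is controlled, using $|x-z|\lesssim 2^{-i}R$ and $|z_k|\sim 2^{-i}R$ on the relevant balls together with the criticality $sp-\alpha-\beta=k$, by a localized piece of the symmetrized Gagliardo energy which, as $(i,j)$ and $x$ vary, has bounded overlap and so totals at most a constant times the right-hand side of the theorem. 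The number of increments is $j+1\sim\ln(4R/|x_k|)$, so passing from the sum to $|u(x)|^{p}$ costs a Hölder/Jensen factor of order $\bigl(\ln(4R/|x_k|)\bigr)^{p-1}$, and interchanging the two summations after integrating $x$ over the shell $\{|x_k|\sim 2^{-j}R\}$ formally produces one further logarithmic factor; the weight $\ln^{-p}(4R/|x_k|)$ in the statement is exactly what is needed to make this borderline double series convergent. Carrying out this endpoint balancing is the main obstacle: the scale sum is genuinely borderline and does not close under a single crude application of Hölder's inequality, so the superlevel-set dichotomy must be used quantitatively --- good $x$ already obey a logarithm-free bound, while for bad $x$ one must feed back the smallness of the telescoping increments relative to $u(x)$ (thinness of the bad set) --- and this is where $p>1$ enters decisively. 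I would carry it out exactly as in \cite[Theorem 6]{adimurthi2024fractional} and \cite{adimurthiSubmanifold}, the genuinely new points being the codimension-$k$ transversal integration, which reduces the $\mathbb{R}^d$ kernel of order $s$ to an $\mathbb{R}^k$ kernel, and the systematic tracking of the two-weight factor $|x_k|^{\alpha}|y_k|^{\beta}$; everything else is bookkeeping of the dyadic overlaps and of elementary near-/far-diagonal estimates on the kernel.
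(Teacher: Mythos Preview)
Your sketch diverges from the paper in two significant ways, and one of them is a genuine gap.

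First, on strategy: the paper does not use a superlevel-set or good/bad dichotomy at all. It tiles $\{|x_k|<2^{n_0+1}\}\times(-2^{n_1},2^{n_1})^{d-k}$ into dyadic ``boxes'' $\mathcal{B}^i_\ell$ (annular in $x_k$ at scale $2^\ell$, cubical in $x_{d-k}$ of the same side length), applies a scaled fractional Poincar\'e inequality on each box to obtain
\[
\int_{\mathcal{B}^i_\ell}\frac{|u|^p}{|x_k|^k\ln^p(4R/|x_k|)}\,dx\ \lesssim\ [u]^p_{W^{s,p,\alpha,\beta}(\mathcal{B}^i_\ell)}+\frac{2^{\ell(d-k)}}{(n_0-\ell+1)^{p}}\,|(u)_{\mathcal{B}^i_\ell}|^p,
\]
and then controls the averages $|(u)_{\mathcal{B}^i_\ell}|^p$ by chaining to the next coarser scale. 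The weight conversion is local and trivial (on each box $|x_k|^\alpha|y_k|^\beta\sim 2^{\ell(\alpha+\beta)}$), so the ``systematic tracking of the two-weight factor'' you list as a novelty is in fact effortless, and no transversal integration of the kernel is ever performed.

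Second, and more importantly: you correctly identify that after a plain H\"older/Jensen step the double scale-sum is borderline divergent --- formally $\sum_j(j+1)^{-1}\sum_{i\le j}E_i$, which after Fubini acquires the divergent tail $\sum_{j\ge i}(j+1)^{-1}$. You propose to cure this by feeding the good/bad dichotomy back ``quantitatively'', but you do not say how, and it is not clear that thinness of the bad set alone can recover the missing decay. The paper's device is different and sidesteps H\"older at this step entirely: it applies the elementary inequality $(|a|+|b|)^p\le c\,|a|^p+(1-c^{-1/(p-1)})^{1-p}|b|^p$ with the \emph{scale-dependent} choice $c=\bigl((n_0-\ell+1)/(n_0-\ell+\tfrac12)\bigr)^{p-1}$. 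Setting $\mathcal{A}_\ell=\sum_i|(u)_{\mathcal{B}^i_\ell}|^p$, this yields the recursion
\[
\frac{2^{\ell(d-k)}}{(n_0-\ell+1)^{p-1}}\,\mathcal{A}_\ell\ \le\ \frac{2^{(\ell+1)(d-k)}}{(n_0-\ell+\tfrac12)^{p-1}}\,\mathcal{A}_{\ell+1}\ +\ C\,[u]^p_{W^{s,p,\alpha,\beta}(\mathcal{B}_\ell\cup\mathcal{B}_{\ell+1})},
\]
and summing in $\ell$ telescopes: the boundary terms vanish (compact support away from $K$ kills the fine-scale end, support inside $|x_k|<R$ kills the coarse end), while the interior leftover is exactly $(n_0-\ell+1)^{1-p}-(n_0-\ell+\tfrac32)^{1-p}\sim(n_0-\ell+1)^{-p}$, which is precisely the factor needed on the left. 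This is the mechanism from \cite{adimurthiSubmanifold} that closes the critical case; your proposal cites that reference but substitutes a different argument for this step that, as written, does not close.
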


\smallskip

The motivation for proving the weighted fractional Hardy inequalities with singularity on flat submanifold obtained from our recent work \cite{adimurthiSubmanifold}, where we derived the full range of fractional Hardy inequalities with singularities on smooth compact sets of codimension $k$, where $1<k<d$.

\subsection*{Applications:} Fractional Hardy inequalities with singularities on flat submanifold have applications in studying fractional Laplacian weighted problems. For example, consider the following fractional $p$-Laplacian weighted problem:
\begin{equation}\label{problem}
   (- \Delta_{p})^{s}u(x) = \lambda \frac{|u(x)|^{p-2}u(x)}{|x_{k}|^{sp}}, \hspace{3mm} x= (x_{k}, x_{d-k}) \in \mathbb{R}^{d} \backslash K,
\end{equation}
where $K$ is defined in \eqref{k defn}, $1<k<d$ and $\lambda>0$. Assuming $ sp \neq k$ and applying Theorem \ref{theorem1} with $\alpha=\beta=0$, it would be interesting to study the above problem.

\smallskip

The article is organized in the following way: In Section ~\ref{preliminaries}, we introduce the preliminary lemmas and notations necessary for proving our main results. Section ~\ref{proof of main result} presents the proof of our main results.

\section{Notations and Preliminaries}\label{preliminaries} 
In this section, we present the notations and preliminary results that will be used throughout the article. The following notations will be used: for a measurable set ~$\Omega \subset \mathbb{R}^d, ~ (u)_{\Omega}$ will denote the average of the function ~$u$ over ~$\Omega$, i.e., 
\begin{equation*}
    (u)_{\Omega} :=  \frac{1}{|\Omega|} \int_{\Omega} u(x)  dx = \fint_{\Omega} u(x)dx. 
\end{equation*}
Here, ~$|\Omega|$ represents the Lebesgue measure of ~$\Omega$. ~$s$ will always be understood to be in ~$(0,1)$. Any point ~$x \in \mathbb{R}^d$ is represented as ~$x=(x_{k},x_{d-k})$ where ~$x_{k} \in \mathbb{R}^{k}$ and ~$x_{d-k} \in \mathbb{R}^{d-k}$. ~$C>0$ will denote a generic constant that may change from line to line.

\bigskip

{\bf Fractional Poincar\'e\ Inequality} ~\cite[Theorem 3.9]{edmunds2022}: Let ~$p \geq 1$ and ~$\Omega \subset \mathbb{R}^d$ be a bounded open set. Then there exists a constant ~$C = C(s,p, \Omega)>0$ such that
\begin{equation}\label{poincare}
    \int_{\Omega} |u(x)-(u)_{\Omega}|^{p}  dx \leq C [u]^{p}_{W^{s, p}(\Omega)}, \hspace{3mm} \forall \ u \in W^{s,p}(\Omega) .
\end{equation} 

\smallskip

The next lemma provides a fractional Poincar\'e\ inequality on a bounded open set with a scaling parameter $\lambda>0$. A similar type of the following lemma for bounded Lipschitz domain is available in ~\cite[Lemma $2.1$]{adimurthiSubmanifold}. This lemma is crucial for proving our main results.

\begin{lemma}\label{sobolev} 
   Let ~$\Omega$ be a bounded open set in ~$\mathbb{R}^{d}, ~ d \geq 1, ~ p \geq 1$ and ~$s \in (0,1)$. Define ~$\Omega_{\lambda}: = \left\{ \lambda x : ~ x \in \Omega \right\}$ for ~$\lambda>0$, there exists a constant ~$C=C(d,p,s, \Omega)>0$ such that
    \begin{equation}
          \fint_{\Omega_{\lambda}} |u(x)-(u)_{\Omega_{\lambda}}|^{p} dx  \leq C  \lambda^{sp-d} [u]^{p}_{W^{s, p}(\Omega_{\lambda})} , \hspace{3mm} \forall \ u \in W^{s, p}(\Omega_{\lambda})  .
    \end{equation}
\end{lemma}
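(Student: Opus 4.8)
The plan is to reduce the scaled statement to the unscaled fractional Poincar\'e inequality \eqref{poincare} by a change of variables. First I would fix $\lambda>0$ and $u\in W^{s,p}(\Omega_\lambda)$, and define the rescaled function $v\colon\Omega\to\mathbb{R}$ by $v(x):=u(\lambda x)$. A direct computation shows that $v\in W^{s,p}(\Omega)$ and that $(v)_\Omega=(u)_{\Omega_\lambda}$, since the substitution $x\mapsto\lambda x$ maps $\Omega$ bijectively onto $\Omega_\lambda$ and the Jacobian factor $\lambda^d$ cancels in the normalized average.

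\smallskip

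Next I would track how each of the two quantities in the inequality transforms. For the left-hand side, substituting $y=\lambda x$ gives
\begin{equation*}
  \fint_{\Omega_\lambda}|u(y)-(u)_{\Omega_\lambda}|^p\,dy
  =\frac{1}{|\Omega_\lambda|}\int_{\Omega}|u(\lambda x)-(u)_{\Omega_\lambda}|^p\lambda^d\,dx
  =\fint_{\Omega}|v(x)-(v)_\Omega|^p\,dx,
\end{equation*}
so the averaged $L^p$ oscillation is scale-invariant. For the right-hand side, in the double integral defining $[u]^p_{W^{s,p}(\Omega_\lambda)}$ I substitute $x=\lambda x'$, $y=\lambda y'$, which contributes $\lambda^{2d}$ from the two Jacobians, $|x-y|^{-(d+sp)}=\lambda^{-(d+sp)}|x'-y'|^{-(d+sp)}$ from the kernel, and $|u(\lambda x')-u(\lambda y')|^p=|v(x')-v(y')|^p$ from the numerator, yielding
\begin{equation*}
  [u]^p_{W^{s,p}(\Omega_\lambda)}=\lambda^{2d-(d+sp)}[v]^p_{W^{s,p}(\Omega)}=\lambda^{d-sp}[v]^p_{W^{s,p}(\Omega)}.
\end{equation*}

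\smallskip

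Finally I would apply the fractional Poincar\'e inequality \eqref{poincare} on the fixed bounded open set $\Omega$ to $v$, obtaining $\fint_\Omega|v-(v)_\Omega|^p\,dx\le C(s,p,\Omega)[v]^p_{W^{s,p}(\Omega)}$ (dividing \eqref{poincare} by $|\Omega|$ to pass to averages), and then substitute the two scaling identities above to conclude
\begin{equation*}
  \fint_{\Omega_\lambda}|u-(u)_{\Omega_\lambda}|^p\,dx\le C\,[v]^p_{W^{s,p}(\Omega)}=C\,\lambda^{sp-d}[u]^p_{W^{s,p}(\Omega_\lambda)},
\end{equation*}
with $C=C(d,p,s,\Omega)$ independent of $\lambda$. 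The argument is essentially a bookkeeping exercise in scaling; the only point requiring a little care is confirming that the constant from \eqref{poincare} genuinely depends only on $\Omega$ and not on $\lambda$ — which is automatic here because we apply it on the fixed domain $\Omega$ — and checking that $v$ indeed lies in $W^{s,p}(\Omega)$ so that \eqref{poincare} is applicable, which follows from the finiteness of $[u]^p_{W^{s,p}(\Omega_\lambda)}$ together with the change of variables. I do not anticipate a genuine obstacle; the main thing to get right is the exponent $d-sp$ (equivalently $sp-d$) in the Jacobian/kernel count.
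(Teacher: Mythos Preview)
Your proposal is correct and follows essentially the same approach as the paper: both reduce to the unscaled fractional Poincar\'e inequality \eqref{poincare} on the fixed domain $\Omega$ by applying it to the rescaled function $v(x)=u(\lambda x)$ and then tracking the Jacobian factors in the change of variables $x\mapsto\lambda x$. Your write-up is in fact a bit more explicit about the scaling bookkeeping (e.g.\ verifying $v\in W^{s,p}(\Omega)$ and computing $[u]^p_{W^{s,p}(\Omega_\lambda)}=\lambda^{d-sp}[v]^p_{W^{s,p}(\Omega)}$ directly), but the argument is the same.
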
 
\begin{proof}
  Let ~$\Omega \subset \mathbb{R}^d$ be a bounded open set. From \eqref{poincare}, we have
\begin{equation}\label{ineqlemma}
     \fint_{\Omega} |u(x)-(u)_{\Omega}|^{p} \ dx   \leq C  [u]^{p}_{W^{s, p}(\Omega)}  . 
    \end{equation}
    Let us apply the above inequality to ~$u(\lambda x)$ instead of ~$u(x)$. This gives
    \begin{equation*}
        \fint_{\Omega}  \Big|u(\lambda x)-\fint_{\Omega} u(\lambda x) dx \Big|^{p } \ dx  \leq C \int_{\Omega} \int_{\Omega} \frac{|u(\lambda x) - u(\lambda y)|^{p}}{|x-y|^{d+sp}} dxdy    . 
    \end{equation*}
    Using the fact
    \begin{equation*}
        \fint_{\Omega} u(\lambda x) dx = \fint_{\Omega_{\lambda}} u(x) dx,
    \end{equation*}
    we have
    \begin{equation*}
         \fint_{\Omega} |u(\lambda x)-(u)_{\Omega_{\lambda}}|^{p } \ dx  \leq C  \int_{\Omega} \int_{\Omega} \frac{|u(\lambda x) - u(\lambda y)|^{p}}{|x-y|^{d+sp}} dxdy   . 
    \end{equation*}
    By changing the variable ~$X=\lambda x$ and ~$Y= \lambda y$, we obtain 
    \begin{equation*}
   \fint_{\Omega_{\lambda}} |u(x)-(u)_{\Omega_{\lambda}}|^{p } dx   \leq C \lambda^{sp-d} [u]^{p}_{W^{s, p}(\Omega_{\lambda})}  . 
    \end{equation*}
    This finishes the proof of the lemma.
\end{proof}

\smallskip

The next lemma establishes a relationship between the averages of $u$ over two disjoint sets. This result is available in our recent work ~\cite[Lemma $2.2$]{adimurthiSubmanifold}.

\begin{lemma}\label{avg}
    Let ~$E$ and ~$F$ be disjoint set in ~$\mathbb{R}^d$. Then for any ~$p \geq 1$ one has for some constant ~$C >0$ such that
    \begin{equation}
        |(u)_{E} - (u)_{F}|^{p} \leq C \frac{|E \cup F|}{\min \{ |E|, |F| \} }  \fint_{E \cup F} |u(x)-(u)_{E \cup F}|^{p}dx  . 
    \end{equation}
\end{lemma}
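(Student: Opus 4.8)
The plan is to compare each of $(u)_E$ and $(u)_F$ with the average of $u$ over the union $G := E \cup F$, and then to combine the two comparisons by a convexity inequality. Throughout I assume $0 < |E|, |F| < \infty$, so that all the averages appearing in the statement are well defined; otherwise the inequality is either vacuous or trivial.

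First I would record the elementary convexity estimate $|a-b|^{p} \le 2^{p-1}\big(|a-c|^{p} + |c-b|^{p}\big)$, valid for $p \ge 1$, and apply it with $a = (u)_E$, $b = (u)_F$, $c = (u)_G$. This reduces the problem to bounding $|(u)_E - (u)_G|^{p}$ and $|(u)_F - (u)_G|^{p}$ separately. For the first, I write $(u)_E - (u)_G = \fint_E \big(u(x) - (u)_G\big)\, dx$ and apply Jensen's inequality for the convex function $t \mapsto |t|^{p}$ to get $|(u)_E - (u)_G|^{p} \le \fint_E |u(x) - (u)_G|^{p}\, dx$. Since $E \subset G$ and the integrand is nonnegative, $\int_E |u(x)-(u)_G|^{p}\, dx \le \int_G |u(x)-(u)_G|^{p}\, dx$, whence
\[
  |(u)_E - (u)_G|^{p} \le \frac{|G|}{|E|}\, \fint_G |u(x)-(u)_G|^{p}\, dx,
\]
and the identical argument gives the analogous bound with $E$ replaced by $F$.

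Finally I would add the two bounds, use $\tfrac{1}{|E|} + \tfrac{1}{|F|} \le \tfrac{2}{\min\{|E|,|F|\}}$, and substitute back into the convexity inequality to obtain the claim with an explicit constant ($C = 2^{p}$ works, with room to spare). I do not expect any genuine obstacle: the argument is entirely elementary, and the only points needing a little care are the bookkeeping of the constant and the observation that disjointness of $E$ and $F$ is not actually needed for the stated inequality (only $E, F \subset E \cup F$ is used), so no hypothesis is wasted and the statement holds as written.
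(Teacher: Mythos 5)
Your argument is correct and complete: the chain of convexity, Jensen's inequality on $E$ (resp.\ $F$), enlargement of the domain of integration to $E\cup F$, and the bound $\tfrac{1}{|E|}+\tfrac{1}{|F|}\le \tfrac{2}{\min\{|E|,|F|\}}$ yields the stated inequality with $C=2^{p}$. The paper itself does not prove this lemma but only cites \cite[Lemma 2.2]{adimurthiSubmanifold}; your proof is the standard elementary argument one would expect there, and your observation that disjointness of $E$ and $F$ is never used is accurate.
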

\begin{proof}
See \cite[Lemma $2.2$]{adimurthiSubmanifold} for the proof.
\end{proof}
\smallskip

The following lemma presents an inequality that is instrumental in establishing our results. A similar version of this lemma can also be found in our recent work \cite{adimurthi2024fractional, adimurthiSubmanifold}.

\begin{lemma}\label{estimate}
    Let ~$p >1$ and $c>1$. Then for all ~$a, ~ b \in \mathbb{R}$, we have 
    \begin{equation}
        (|a| + |b|)^{p} \leq c|a|^{p} + (1-c^{\frac{-1}{p -1}})^{1-p} |b|^{p} .
    \end{equation}
\end{lemma}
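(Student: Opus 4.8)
The plan is to homogenize and reduce the statement to a one-variable convex inequality. If $a=b=0$ there is nothing to prove, so assume $|a|+|b|>0$ and set $t:=\frac{|a|}{|a|+|b|}\in[0,1]$. Dividing the claimed inequality by $(|a|+|b|)^{p}$, it becomes equivalent to
\begin{equation*}
 1\le c\,t^{p}+M\,(1-t)^{p}\qquad\text{for all }t\in[0,1],
\end{equation*}
where $M:=(1-c^{-1/(p-1)})^{1-p}$. Since $c>1$ and $p>1$ we have $0<1-c^{-1/(p-1)}<1$, so $M$ is well defined and in fact $M>1$ (a negative power of a number in $(0,1)$); this also already disposes of the degenerate cases $a=0$ and $b=0$.

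Next I would study $f(t):=c\,t^{p}+M\,(1-t)^{p}$ on $[0,1]$. Because $p>1$, $f$ is strictly convex, so it attains its minimum at the unique solution of $f'(t)=0$, i.e.\ of $c\,t^{p-1}=M\,(1-t)^{p-1}$, provided that solution lies in $(0,1)$. A direct computation, using $M/c=(c^{1/(p-1)}-1)^{-(p-1)}$, gives $\frac{t}{1-t}=\bigl(M/c\bigr)^{1/(p-1)}=\frac{1}{c^{1/(p-1)}-1}$, hence the critical point is $t_{0}=c^{-1/(p-1)}$, which indeed lies in $(0,1)$ thanks to $c>1$. Substituting $t_{0}$ back and simplifying the exponents — $c\,c^{-p/(p-1)}=c^{-1/(p-1)}$ and $M\,(1-c^{-1/(p-1)})^{p}=1-c^{-1/(p-1)}$ — yields $f(t_{0})=c^{-1/(p-1)}+\bigl(1-c^{-1/(p-1)}\bigr)=1$. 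Therefore $\min_{[0,1]}f=1$, which is precisely the desired inequality, and one sees in passing that the constant $M$ is sharp, with equality when $|a|/|b|=1/(c^{1/(p-1)}-1)$.

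The argument is entirely elementary; the only step needing a line of care is verifying that the critical point $t_{0}$ falls in the open interval $(0,1)$ and that $M$ is positive and finite, both of which are immediate consequences of $c>1$ and $p>1$. As an alternative, shorter route, the same inequality with the same constant follows in essentially one line from Hölder's inequality applied to the splitting $|a|+|b|=\lambda^{1-1/p}\bigl(\lambda^{1/p-1}|a|\bigr)+\mu^{1-1/p}\bigl(\mu^{1/p-1}|b|\bigr)$, raising to the $p$-th power, and then choosing $\mu/\lambda=c^{1/(p-1)}-1$; I would use whichever of the two presentations is cleaner in the final write-up.
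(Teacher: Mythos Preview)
Your argument is correct: the homogenization $t=|a|/(|a|+|b|)$ reduces the claim to $f(t):=c\,t^{p}+M(1-t)^{p}\ge 1$ on $[0,1]$, the strict convexity of $f$ for $p>1$ localizes the minimum at the unique critical point $t_{0}=c^{-1/(p-1)}\in(0,1)$, and the substitution gives $f(t_{0})=c^{-1/(p-1)}+(1-c^{-1/(p-1)})=1$. The side checks (positivity and finiteness of $M$, location of $t_{0}$) are handled, and the H\"older-based alternative you sketch also works.

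As for comparison with the paper: the paper does not actually prove this lemma here; it simply cites \cite[Lemma~2.5]{adimurthiSubmanifold}. Your write-up is therefore a self-contained improvement over what the present paper offers, and since the argument is only a few lines, including it rather than deferring to an external reference is preferable. The sharpness remark and equality case you record are a small bonus not stated in the paper.
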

\begin{proof}
    See ~\cite[Lemma $2.5$]{adimurthiSubmanifold} for the proof.
\end{proof}

%------------------------------------------------------------------

\section{Proof of the main results}\label{proof of main result}

In this section, we establish our main results. We begin by considering a domain of the form $ \mathcal{D}:= B^{k}(0, 2^{n_{0}+1}) \times (-2^{n_{1}},2^{n_{1}})^{d-k}$, where $n_{0}, n_{1} \in \mathbb{Z}$ and $2^{n_{1}} \geq 2^{n_{0}+1}$ and obtaining that the constant $C>0$ does not depends on $n_{0}$ and $n_{1}$. Here, $B^{k}(0, 2^{n_{0}+1})$ denotes a ball centered at $0$ with radius $ 2^{n_{0}+1}$ in $\mathbb{R}^{k}$. This choice is motivated by the fact that for any function $u \in C^{1}_{c}(\mathbb{R}^{d} \backslash K)$, where  $K = \{ x=(x_{k}, x_{d-k}) \in \mathbb{R}^{k} \times \mathbb{R}^{d-k} : x_{k} =0 \}$, we can always find suitable values of $n_{0}, n_{1} \in \mathbb{Z}$ such that $supp(u) \subset \mathcal{D}$. 

\smallskip

For ~$1 < k < d, ~ k \in \mathbb{N}$. Choose $n_{0}, ~ n_{1} \in \mathbb{Z}$ such that $2^{n_{1}} \geq 2^{n_{0}+1}$. Let ~$x = (x_{k}, x_{d-k}) \in \mathbb{R}^{k} \times \mathbb{R}^{d-k}$, where ~$x_{k} \in \mathbb{R}^k$ and ~$x_{d-k} \in \mathbb{R}^{d-k}$. Define  
\begin{equation*}
    \mathcal{D} = \left\{ x= (x_{k}, x_{d-k}) :  |x_{k}| < 2^{n_{0}+1} \ \text{and} \ x_{d-k} \in (-2^{n_{1}}, 2^{n_{1}})^{d-k}   \right\},
\end{equation*}
For each ~$\ell \leq n_{0}$, we define
\begin{equation*}
    \mathcal{B}_{\ell} = \{ x=(x_{k},x_{d-k}) \in \mathcal{D} :  2^{ \ell}  \leq |x_{k}| < 2^{\ell +1}  \},
\end{equation*}
Theferore, we have
\begin{equation*}
    \mathcal{D} = \bigcup_{\ell= - \infty}^{n_{0}} \mathcal{B}_{\ell}.
\end{equation*}
Again, we further divide ~$\mathcal{B}_{\ell}$ into a disjoint union of identical sets, denoted as ~$\mathcal{B}^{i}_{\ell}$, such that if ~$x = (x_{k}, x_{d-k}) \in \mathcal{B}^{i}_{\ell}$, then ~$x_{d-k} \in C^{i}_{\ell}$, where ~$C^{i}_{\ell}$  is a cube of side length ~$2^{ \ell}$ in $\mathbb{R}^{d-k}$. Then, we have
\begin{equation*}
      \mathcal{B}_{\ell} = \bigcup_{i = 1}^{\sigma_{\ell}} \mathcal{B}^{i}_{\ell}  ,
\end{equation*}
where ~$\sigma_{\ell} = 2^{(- \ell+1)(d-k)} 2^{n_{1}(d-k)}$. The Lebesgue measure of ~$\mathcal{B}^{i}_{\ell}$ is then given by
\begin{equation*}
    |\mathcal{B}^{i}_{\ell}| = \left( \frac{|\mathbb{S}^{k-1}| (2^{k}-1)  2^{\ell k}}{k} \right) \times 2^{\ell (d-k)} =  \frac{|\mathbb{S}^{k-1}| (2^{k}-1)  2^{\ell d}}{k} .
\end{equation*}
Here, $\mathbb{S}^{k-1}$ represents the boundary of an open ball of radius ~$1$ with center ~$0$ in ~$\mathbb{R}^{k}$.
\begin{figure}[!ht] 
\begin{center}
\begin{tikzpicture}
\node at (-2,.5) {$2^{\ell}\leq |x_{k}| < 2^{\ell +1}$};
    
\draw (0,1) -- (3,1);
\draw (0,-1) -- (3,-1);
\draw[dashed] (0,.5) -- (3,.5);
\draw[dashed] (0,-.5) -- (3,-.5);

\draw (0,0) ellipse [y radius=1, x radius=0.5];
\fill[blue!20, opacity=0.5] (0,0) ellipse [y radius=1, x radius=0.5];
\fill[white, opacity=1] (0,0) ellipse [y radius=0.5, x radius=0.25];
\draw (0,0) ellipse [y radius=.5, x radius=0.25];

\draw (3,0) ellipse [y radius=1, x radius=0.5];
\fill[blue!20, opacity=0.5] (3,0) ellipse [y radius=1, x radius=0.5];
\fill[white, opacity=1] (3,0) ellipse [y radius=0.5, x radius=0.25];
\draw (3,0) ellipse [y radius=.5, x radius=0.25];
\node at (1.5,1.5) {$C^{i}_{\ell}$};
\end{tikzpicture}
\end{center}
\caption{$\mathcal{B}^{i}_{\ell}= \{ x=(x_{k},x_{d-k}) \in \mathcal{B}_{\ell} : x_{d-k} \in C^{i}_{\ell} \}   $.}
\label{fig:myfigure3}
\end{figure}
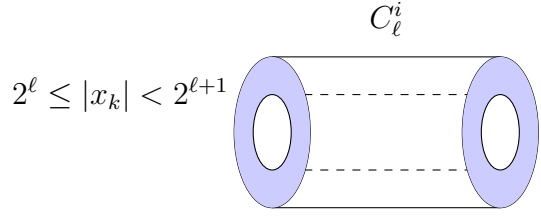

\smallskip

The following lemma provides an inequality that holds true for each ~$\mathcal{B}^{i}_{\ell}$. This lemma is helpful in establishing our main results.

\begin{lemma}\label{sumineqlemma}
For any ~$\mathcal{B}^{i}_{\ell}$ and $\alpha,  \beta \in \mathbb{R}$ such that $sp- \alpha- \beta>0$, there exists a constant ~$C=C(d,p,s, \alpha, \beta)>0$ such that the following inequality holds
   \begin{equation}
 \int_{\mathcal{B}^{i}_{\ell}} \frac{|u(x)|^{p}}{|x_{k}|^{sp- \alpha - \beta }}  dx \leq C [u]^{p}_{W^{s, p, \alpha, \beta}(\mathcal{B}^{i}_{\ell})} + C 2^{\ell (d-sp+ \alpha+ \beta)} |(u)_{\mathcal{B}^{i}_{\ell}}|^{p}  .
\end{equation} 
\end{lemma}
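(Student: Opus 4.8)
The plan is to split $u$ into its average part and its oscillation part on the set $\mathcal{B}^i_\ell$, and to handle the weights by observing that on $\mathcal{B}^i_\ell$ one has $2^\ell \le |x_k| < 2^{\ell+1}$, so $|x_k|$ is comparable to $2^\ell$ up to constants depending only on $d$. Concretely, write
\begin{equation*}
    |u(x)|^p \le 2^{p-1}\left( |u(x) - (u)_{\mathcal{B}^i_\ell}|^p + |(u)_{\mathcal{B}^i_\ell}|^p \right),
\end{equation*}
(or use Lemma \ref{estimate} if a sharper split is wanted), and divide through by $|x_k|^{sp-\alpha-\beta}$. Since $sp-\alpha-\beta>0$, on $\mathcal{B}^i_\ell$ we have $|x_k|^{-(sp-\alpha-\beta)} \le C 2^{-\ell(sp-\alpha-\beta)}$. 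Integrating the second term over $\mathcal{B}^i_\ell$ and using $|\mathcal{B}^i_\ell| = C 2^{\ell d}$ immediately produces the term $C 2^{\ell(d-sp+\alpha+\beta)} |(u)_{\mathcal{B}^i_\ell}|^p$. So the real content is to bound
\begin{equation*}
    \int_{\mathcal{B}^i_\ell} \frac{|u(x) - (u)_{\mathcal{B}^i_\ell}|^p}{|x_k|^{sp-\alpha-\beta}}\, dx \le C 2^{-\ell(sp-\alpha-\beta)} \int_{\mathcal{B}^i_\ell} |u(x)-(u)_{\mathcal{B}^i_\ell}|^p\, dx
\end{equation*}
by $C [u]^p_{W^{s,p,\alpha,\beta}(\mathcal{B}^i_\ell)}$.

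For that, the idea is to apply the scaled fractional Poincaré inequality, Lemma \ref{sobolev}. The set $\mathcal{B}^i_\ell$ is, up to a translation in the $x_{d-k}$ variables (which does not affect any seminorm), a dilate by $\lambda = 2^\ell$ of a fixed reference set $\Omega$ — namely the annular-cylinder $\{(y_k, y_{d-k}) : 1 \le |y_k| < 2,\ y_{d-k} \in (0,1)^{d-k}\}$ (or a translate thereof). Translation invariance of both $\fint_{\mathcal{B}^i_\ell}|u - (u)_{\mathcal{B}^i_\ell}|^p$ and of $[u]_{W^{s,p}}$ reduces us to $\Omega_\lambda$ with $\lambda = 2^\ell$, and Lemma \ref{sobolev} gives
\begin{equation*}
    \fint_{\mathcal{B}^i_\ell} |u(x)-(u)_{\mathcal{B}^i_\ell}|^p\, dx \le C\, 2^{\ell(sp-d)} [u]^p_{W^{s,p}(\mathcal{B}^i_\ell)},
\end{equation*}
with $C$ depending only on $d, p, s$ (through the fixed reference $\Omega$). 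Multiplying by $|\mathcal{B}^i_\ell| = C 2^{\ell d}$ converts the average into an integral: $\int_{\mathcal{B}^i_\ell} |u - (u)_{\mathcal{B}^i_\ell}|^p\, dx \le C 2^{\ell sp} [u]^p_{W^{s,p}(\mathcal{B}^i_\ell)}$. Combining with the factor $2^{-\ell(sp-\alpha-\beta)}$ from the weight, the $2^{\ell sp}$ powers cancel and we are left with $C 2^{\ell(\alpha+\beta)} [u]^p_{W^{s,p}(\mathcal{B}^i_\ell)}$.

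The last step is to absorb $2^{\ell(\alpha+\beta)} [u]^p_{W^{s,p}(\mathcal{B}^i_\ell)}$ into $C[u]^p_{W^{s,p,\alpha,\beta}(\mathcal{B}^i_\ell)}$. This is again the comparability of $|x_k|$ (and $|y_k|$) to $2^\ell$ on $\mathcal{B}^i_\ell$: in the weighted seminorm the integrand carries $|x_k|^\alpha |y_k|^\beta$, and for $x, y \in \mathcal{B}^i_\ell$ both of these are comparable to $2^{\ell\alpha}$ and $2^{\ell\beta}$ respectively, with comparison constants depending on $\alpha, \beta$ (and $d$) but not on $\ell$ — here one uses that $t \mapsto t^\alpha$ is monotone on $[2^\ell, 2^{\ell+1})$ and the ratio of endpoints is $2$, so $2^{\ell\alpha} \le |x_k|^\alpha \le 2^{|\alpha|} 2^{\ell\alpha}$ (and symmetrically, accounting for sign of $\alpha$). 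Hence $[u]^p_{W^{s,p}(\mathcal{B}^i_\ell)} \le C 2^{-\ell(\alpha+\beta)} [u]^p_{W^{s,p,\alpha,\beta}(\mathcal{B}^i_\ell)}$, and the powers cancel. The main obstacle — though a mild one — is bookkeeping the dependence of all constants: one must check carefully that the Poincaré constant from Lemma \ref{sobolev} and the weight-comparison constants are genuinely independent of $\ell$ (and of $i$, $n_0$, $n_1$), which follows because every $\mathcal{B}^i_\ell$ is a translate-dilate of one fixed reference domain and the weight oscillates by a bounded factor on each dyadic annulus. Assembling the three estimates (split, scaled Poincaré, weight comparison) yields the claimed inequality.
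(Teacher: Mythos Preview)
Your proof is correct and follows essentially the same approach as the paper: split $u$ into its average and oscillation on $\mathcal{B}^i_\ell$, bound the weight $|x_k|^{-(sp-\alpha-\beta)}$ by $C\,2^{-\ell(sp-\alpha-\beta)}$, apply the scaled Poincar\'e inequality (Lemma~\ref{sobolev}) on the fixed reference annular cylinder to control the oscillation by $C\,2^{\ell(\alpha+\beta)}[u]^p_{W^{s,p}(\mathcal{B}^i_\ell)}$, and then absorb the $2^{\ell(\alpha+\beta)}$ into the weighted seminorm via the comparability $2^{\ell(\alpha+\beta)} \le C|x_k|^\alpha|y_k|^\beta$ on $\mathcal{B}^i_\ell$. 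The only cosmetic difference is your choice of reference cube $(0,1)^{d-k}$ versus the paper's $(1,2)^{d-k}$, which is immaterial by translation invariance.
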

\begin{proof}
    Fix any  ~$\mathcal{B}^{i}_{\ell}$. By applying Lemma \ref{sobolev} with the domain ~$\Omega = \{ (x_{k}, x_{d-k}) : 1 < |x_{k}| < 2  \ \text{and} \ x_{d-k} \in (1,2)^{d-k} \}$, ~$ \lambda = 2^{\ell}$ and using translation invariance, we have
\begin{equation}\label{eqn1}
    \fint_{\mathcal{B}^{i}_{\ell}} |u(x)-(u)_{\mathcal{B}^{i}_{\ell}}|^{p}  dx \leq C 2^{\ell \left(sp-d \right) }[u]^{p}_{W^{s, p}(\mathcal{B}^{i}_{\ell})}  ,
\end{equation}
where ~$C= C(d,p,s)$ is a positive constant. For ~$x = (x_{k},x_{d-k}) \in \mathcal{B}^{i}_{\ell}$, we have ~$ \frac{1}{|x_{k}|} \leq \frac{1}{2^{\ell}}$. Therefore, we have
\begin{eqnarray*}
    \int_{\mathcal{B}^{i}_{\ell}} \frac{|u(x)|^{p}}{|x_{k}|^{sp- \alpha- \beta}}  dx &\leq& \frac{1}{2^{\ell (sp- \alpha- \beta)}} \int_{\mathcal{B}^{i}_{\ell}} |u(x)-(u)_{\mathcal{B}^{i}_{\ell}} + (u)_{\mathcal{B}^{i}_{\ell}}|^{p}  dx \\
    &\leq& \frac{2^{p-1}}{2^{\ell (sp- \alpha- \beta)}} \int_{\mathcal{B}^{i}_{\ell}} |u(x)-(u)_{\mathcal{B}^{i}_{\ell}}|^{p}  dx + \frac{2^{p-1}}{2^{\ell (sp- \alpha- \beta)}} \int_{\mathcal{B}^{i}_{\ell}} |(u)_{\mathcal{B}^{i}_{\ell}}|^{p}dx,
    \end{eqnarray*}
  Now, applying inequality \eqref{eqn1} together with the fact that  $2^{\ell (\alpha+ \beta)} \leq C |x_{k}|^{\alpha}|y_{k}|^{\beta}$ for some positive constant $C$ depends on $\alpha$ and $\beta$, and for all $(x_{k}, x_{d-k}), (y_{k}, y_{d-k}) \in \mathcal{B}^{i}_{\ell}$, we derive
    \begin{eqnarray*}
    \int_{\mathcal{B}^{i}_{\ell}} \frac{|u(x)|^{p}}{|x_{k}|^{sp-\alpha-\beta}}  dx &\leq& C \frac{|\mathcal{B}^{i}_{\ell}|}{2^{\ell (sp-\alpha-\beta)}} \fint_{\mathcal{B}^{i}_{\ell}} |u(x)-(u)_{\mathcal{B}^{i}_{\ell}}|^{p}  dx  + C \frac{|\mathcal{B}^{i}_{\ell}|}{2^{\ell (sp-\alpha-\beta)}} |(u)_{\mathcal{B}^{i}_{\ell}}|^{p} \\
   & \leq & C 2^{\ell (\alpha+ \beta)} [u]^{p}_{W^{s, p}(\mathcal{B}^{i}_{\ell})}  + C 2^{\ell (d-sp+ \alpha+ \beta )} |(u)_{\mathcal{B}^{i}_{\ell}}|^{p} \\ &\leq&   C[u]^{p}_{W^{s,p, \alpha, \beta}(\mathcal{B}^{i}_{\ell})} + C 2^{\ell (d-sp+ \alpha+ \beta )} |(u)_{\mathcal{B}^{i}_{\ell}}|^{p},
\end{eqnarray*}   
where ~$C= C(d,p,s, \alpha, \beta)$ is a positive constant.
\end{proof}

\smallskip

The following lemma establishes a connection between the averages of two disjoint sets ~$\mathcal{B}^{j}_{\ell+1}$ and ~$\mathcal{B}^{i}_{\ell}$, where ~$\mathcal{B}^{j}_{\ell+1}$ is chosen in such a way that ~$C^{i}_{\ell} \subset C^{j}_{\ell+1}$, i.e., 
\begin{equation}\label{codn on Al and Al+1}
    \mathcal{B}^{i}_{\ell} \subset \{ (x_{k}, x_{d-k}) \in \mathcal{B}_{\ell} : x_{d-k} \in C^{j}_{\ell +1}   \}.
\end{equation}
\begin{figure}[!ht] 
\begin{center}
\begin{tikzpicture}

\draw (0,1.5) -- (6,1.5);
\draw (0,-1.5) -- (6,-1.5);

\draw (2,1) -- (4,1);
\draw (2,-1) -- (4,-1);

\draw (0,0) ellipse [y radius=1.5, x radius=1];
\fill[blue!20, opacity=0.5] (0,0) ellipse [y radius=1.5, x radius=1];
\fill[white, opacity=1] (0,0) ellipse [y radius=1, x radius=0.5];
\draw (0,0) ellipse [y radius=.5, x radius=0.25];
\draw (2,0) ellipse [y radius=1, x radius=0.5];
\fill[blue!20, opacity=0.5] (2,0) ellipse [y radius=1, x radius=.5];
\fill[green!20, opacity=0.5] (2,0) ellipse [y radius=1, x radius=.5];
\fill[white, opacity=1] (2,0) ellipse [y radius=.5, x radius=0.25];
\draw (0,0) ellipse [y radius=1, x radius=0.5];
\draw (2,0) ellipse [y radius=.5, x radius=0.25];

\draw (6,0) ellipse [y radius=1.5, x radius=1];    
\fill[blue!20, opacity=0.5] (6,0) ellipse [y radius=1.5, x radius=1];
\fill[white, opacity=1] (6,0) ellipse [y radius=1, x radius=0.5];
\draw (6,0) ellipse [y radius=.5, x radius=0.25];
\draw (4,0) ellipse [y radius=1, x radius=0.5];
\fill[blue!20, opacity=0.5] (4,0) ellipse [y radius=1, x radius=.5];
\fill[green!20, opacity=0.5] (4,0) ellipse [y radius=1, x radius=.5];
\fill[white, opacity=1] (4,0) ellipse [y radius=.5, x radius=0.25];
\draw (6,0) ellipse [y radius=1, x radius=0.5];
\draw (4,0) ellipse [y radius=.5, x radius=0.25];

\draw[dashed] (0,1) -- (6,1);
\draw[dashed] (0,-1) -- (6,-1);
\draw[dashed] (0,-.5) -- (6,-.5); 
\draw[dashed] (0,.5) -- (6,.5);

\node at (2,.7) { $\mathcal{B}^{i}_{\ell}$ };
\node at (0,1.22) {$\mathcal{B}^{j}_{\ell +1}$};
\node at (2.4,2) {$C^{j}_{\ell+1}$};
\node at (3,1.25) {$C^{i}_{\ell}$};
\end{tikzpicture}
\end{center}
\caption{$\mathcal{B}^{i}_{\ell}$ and $\mathcal{B}^{j}_{\ell +1}$ .}
\label{fig:myfigure4}
\end{figure}
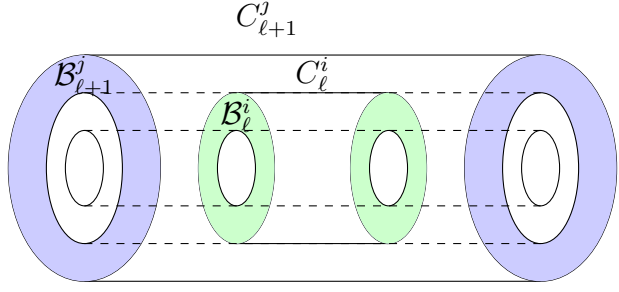
 
\noindent Also, there are $2^{d-k}$ such $\mathcal{B}^{i}_{\ell}$'s which satisfies the above relation. This lemma plays a crucial role in establishing our main results.

\begin{lemma}\label{est2}
    Let $\mathcal{B}^{i}_{\ell}$ and $\mathcal{B}^{j}_{\ell+1}$ be the set such that it satisfies ~\eqref{codn on Al and Al+1}. Then
    \begin{equation}
        |(u)_{\mathcal{B}^{i}_{\ell}} - (u)_{\mathcal{B}^{j}_{\ell+1}} |^{p} \leq C 2^{\ell \left(sp-d \right)} [u]^{p}_{W^{s, p}(\mathcal{B}^{i}_{\ell} \cup \mathcal{B}^{j}_{\ell+1})},
    \end{equation}
    where ~$C$ is a positive constant independent of ~$\ell$.
\end{lemma}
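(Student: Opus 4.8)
\textbf{Proof plan for Lemma \ref{est2}.}
The plan is to combine Lemma \ref{avg} (the comparison of averages over two disjoint sets) with Lemma \ref{sobolev} (the scaled fractional Poincar\'e inequality), applied to the union $E\cup F$ where $E=\mathcal{B}^{i}_{\ell}$ and $F=\mathcal{B}^{j}_{\ell+1}$. First I would record the relevant measures: from the explicit computation preceding the lemma, $|\mathcal{B}^{i}_{\ell}| = \frac{|\mathbb{S}^{k-1}|(2^{k}-1)}{k}\,2^{\ell d}$, and similarly $|\mathcal{B}^{j}_{\ell+1}| = \frac{|\mathbb{S}^{k-1}|(2^{k}-1)}{k}\,2^{(\ell+1) d}$; hence $\min\{|E|,|F|\}$ and $|E\cup F|$ are both comparable to $2^{\ell d}$ up to a dimensional constant, so the prefactor $\frac{|E\cup F|}{\min\{|E|,|F|\}}$ in Lemma \ref{avg} is bounded by a constant $C=C(d,k)$ independent of $\ell$. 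This reduces the claim to bounding $\fint_{E\cup F}|u-(u)_{E\cup F}|^{p}\,dx$ by $C\,2^{\ell(sp-d)}[u]^{p}_{W^{s,p}(E\cup F)}$.

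For that last estimate I would apply Lemma \ref{sobolev}. The point is that under the relation \eqref{codn on Al and Al+1}, the set $\mathcal{B}^{i}_{\ell}\cup\mathcal{B}^{j}_{\ell+1}$ is, up to translation in the $x_{d-k}$ variable, a dilate by $\lambda=2^{\ell}$ of a \emph{fixed} reference set $\Omega$: namely the union of an annular region $\{1\le|x_k|<2\}$ crossed with a unit cube in $\mathbb{R}^{d-k}$ (the copy living in $\mathcal{B}_{\ell}$), together with the annular region $\{2\le|x_k|<4\}$ crossed with the appropriate $2$-cube in $\mathbb{R}^{d-k}$ (the copy in $\mathcal{B}_{\ell+1}$), where the two cubes are arranged so that the smaller sits inside the larger exactly as \eqref{codn on Al and Al+1} prescribes. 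This $\Omega$ is a fixed bounded open set depending only on $d$ and $k$ (one must check it is open and that its two pieces overlap in a nonempty open set in the $x_k$-direction region $2\le|x_k|<... $ — actually they share the slab geometry, so $\Omega$ is open and bounded), and crucially it does not depend on $\ell$. Applying Lemma \ref{sobolev} to $\Omega_{2^\ell}=\mathcal{B}^{i}_{\ell}\cup\mathcal{B}^{j}_{\ell+1}$ (using translation invariance of the Gagliardo seminorm in $x_{d-k}$) gives exactly
\[
\fint_{\mathcal{B}^{i}_{\ell}\cup\mathcal{B}^{j}_{\ell+1}} |u(x)-(u)_{\mathcal{B}^{i}_{\ell}\cup\mathcal{B}^{j}_{\ell+1}}|^{p}\,dx \leq C\,2^{\ell(sp-d)}\,[u]^{p}_{W^{s,p}(\mathcal{B}^{i}_{\ell}\cup\mathcal{B}^{j}_{\ell+1})},
\]
with $C=C(d,p,s,k)$ independent of $\ell$. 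Chaining this with Lemma \ref{avg} and the measure bounds above yields the claim.

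The main obstacle I anticipate is the geometric bookkeeping needed to see that $\mathcal{B}^{i}_{\ell}\cup\mathcal{B}^{j}_{\ell+1}$ really is a clean dilate of one fixed admissible domain $\Omega$, so that the constant from Lemma \ref{sobolev} is genuinely $\ell$-independent: one has to fix the relative position of the unit cube inside the $2$-cube (finitely many choices, all congruent, so it suffices to treat one), verify that the resulting $\Omega$ is bounded and open, and confirm that the dilation-plus-translation sends $\Omega$ exactly onto $\mathcal{B}^{i}_{\ell}\cup\mathcal{B}^{j}_{\ell+1}$. Everything else — the measure comparisons and the invocation of Lemmas \ref{sobolev} and \ref{avg} — is routine, with the exponent $sp-d$ appearing precisely because Lemma \ref{sobolev} contributes $\lambda^{sp-d}=2^{\ell(sp-d)}$ and the measure ratio in Lemma \ref{avg} contributes only an $\ell$-independent constant.
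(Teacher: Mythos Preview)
Your proposal is correct and follows essentially the same approach as the paper: apply Lemma~\ref{avg} to $E=\mathcal{B}^{i}_{\ell}$, $F=\mathcal{B}^{j}_{\ell+1}$ (where the measure ratio is an $\ell$-independent dimensional constant), then apply Lemma~\ref{sobolev} to the union, viewed as a translate of the dilate of a fixed reference domain, to pick up the factor $2^{\ell(sp-d)}$. The only cosmetic difference is that the paper takes the scaling parameter $\lambda=2^{\ell+1}$ rather than $2^{\ell}$, which of course changes nothing; your extra remarks on the finitely many congruent positions of $C^{i}_{\ell}$ inside $C^{j}_{\ell+1}$ and on the shape of the reference domain $\Omega$ are more detailed than the paper's one-line invocation, but the argument is the same.
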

\begin{proof}
Let us consider ~$|(u)_{\mathcal{B}^{i}_{\ell}} - (u)_{\mathcal{B}^{j}_{\ell+1}} |^{p}$  and applying Lemma ~\ref{avg} with the sets ~$E = \mathcal{B}^{i}_{\ell}$ and ~$F = \mathcal{B}^{j}_{\ell+1}$, we obtain for some constant ~$C$ (independent of ~$\ell$), 
\begin{equation}\label{compineq1}
|(u)_{\mathcal{B}^{i}_{\ell}} - (u)_{\mathcal{B}^{j}_{\ell+1}} |^{p} \leq C \fint_{\mathcal{B}^{i}_{\ell} \cup \mathcal{B}^{j}_{\ell+1} }  |u(x) - (u)_{\mathcal{B}^{i}_{\ell} \cup \mathcal{B}^{j}_{\ell+1}}|^{p} dx.
\end{equation}
Choose an open set ~$\Omega$ such that ~$\Omega_{\lambda}$ is a translation of ~$ \mathcal{B}^{i}_{\ell} \cup \mathcal{B}^{j}_{\ell+1}$ with scaling parameter ~$\lambda=2^{\ell+1}$. Applying Lemma ~\ref{sobolev} with this ~$\Omega$ and ~$\lambda=2^{\ell+1}$, and using translation invariance, we obtain
\begin{equation}\label{compineq2}
     \fint_{\mathcal{B}^{i}_{\ell} \cup \mathcal{B}^{j}_{\ell+1} }  |u(x) - (u)_{\mathcal{B}^{i}_{\ell} \cup \mathcal{B}^{j}_{\ell+1}}|^{p} dx \leq C 2^{\ell \left(sp-d \right)} [u]^{p}_{W^{s, p}(\mathcal{B}^{i}_{\ell} \cup \mathcal{B}^{j}_{\ell+1})}.
\end{equation}
By combining ~\eqref{compineq1} and ~\eqref{compineq2}, we establish the lemma.
\end{proof}

\smallskip

The next lemma calculate a inequality which is useful in establishing our main results.

\begin{lemma}\label{lemma 3}
    For any $u \in W^{s,p, \alpha, \beta}(\mathbb{R})$, the following inequality holds
    \begin{equation}\label{se1}
     \sum_{\ell= m}^{n_{0}} [u]^{p}_{W^{s,p, \alpha, \beta}(\mathcal{B}_{\ell} \cup \mathcal{B}_{\ell+1})} \leq 2 [u]^{p}_{W^{s,p, \alpha, \beta}(\mathbb{R}^{d})}.
\end{equation}
\end{lemma}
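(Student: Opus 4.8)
The plan is to prove the bound by observing that consecutive pairs $\mathcal{B}_\ell \cup \mathcal{B}_{\ell+1}$ have \emph{bounded overlap}: the chain of dyadic annular shells is arranged so that each shell $\mathcal{B}_\ell$ appears in exactly two of the pairs, namely $\mathcal{B}_{\ell-1}\cup\mathcal{B}_\ell$ and $\mathcal{B}_\ell\cup\mathcal{B}_{\ell+1}$. Since the weighted Gagliardo seminorm $[u]^p_{W^{s,p,\alpha,\beta}(\cdot)}$ is the integral of the nonnegative kernel $|u(x)-u(y)|^p|x-y|^{-d-sp}|x_k|^\alpha|y_k|^\beta$ over $\Omega\times\Omega$, monotonicity in the domain gives $[u]^p_{W^{s,p,\alpha,\beta}(\Omega)} \le [u]^p_{W^{s,p,\alpha,\beta}(\mathbb{R}^d)}$ whenever $\Omega\subset\mathbb{R}^d$, and more importantly the ``superadditivity under bounded overlap'' property: if $\{\Omega_j\}$ cover $\mathbb{R}^d$ with each point lying in at most $N$ of them, then $\sum_j \int_{\Omega_j}\int_{\Omega_j}(\cdots) \le N^2 \int_{\mathbb{R}^d}\int_{\mathbb{R}^d}(\cdots)$, and in fact if the pairs $(x,y)$ are controlled one gets a factor $N$ rather than $N^2$ in favourable geometric situations.

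Concretely, first I would record that for each fixed $\ell$,
\[
[u]^{p}_{W^{s,p,\alpha,\beta}(\mathcal{B}_{\ell}\cup\mathcal{B}_{\ell+1})}
= \int_{\mathcal{B}_{\ell}\cup\mathcal{B}_{\ell+1}}\int_{\mathcal{B}_{\ell}\cup\mathcal{B}_{\ell+1}}
\frac{|u(x)-u(y)|^{p}}{|x-y|^{d+sp}}|x_{k}|^{\alpha}|y_{k}|^{\beta}\,dx\,dy,
\]
and that $(\mathcal{B}_{\ell}\cup\mathcal{B}_{\ell+1})\times(\mathcal{B}_{\ell}\cup\mathcal{B}_{\ell+1})$ decomposes into the four blocks $\mathcal{B}_\ell\times\mathcal{B}_\ell$, $\mathcal{B}_\ell\times\mathcal{B}_{\ell+1}$, $\mathcal{B}_{\ell+1}\times\mathcal{B}_\ell$, $\mathcal{B}_{\ell+1}\times\mathcal{B}_{\ell+1}$. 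Summing over $\ell$ from $m$ to $n_0$, the ``diagonal'' blocks $\mathcal{B}_\ell\times\mathcal{B}_\ell$ each get counted at most twice (once from the pair indexed $\ell-1$, once from the pair indexed $\ell$), and the ``off-diagonal'' blocks $\mathcal{B}_\ell\times\mathcal{B}_{\ell+1}$ and $\mathcal{B}_{\ell+1}\times\mathcal{B}_\ell$ are each counted exactly once. Hence every piece of the domain $\mathbb{R}^d\times\mathbb{R}^d$ is covered with multiplicity at most $2$ by the family $\{(\mathcal{B}_\ell\cup\mathcal{B}_{\ell+1})^2\}_{\ell=m}^{n_0}$, and since the integrand is nonnegative,
\[
\sum_{\ell=m}^{n_{0}}[u]^{p}_{W^{s,p,\alpha,\beta}(\mathcal{B}_{\ell}\cup\mathcal{B}_{\ell+1})}
\le 2\int_{\mathbb{R}^{d}}\int_{\mathbb{R}^{d}}
\frac{|u(x)-u(y)|^{p}}{|x-y|^{d+sp}}|x_{k}|^{\alpha}|y_{k}|^{\beta}\,dx\,dy
= 2[u]^{p}_{W^{s,p,\alpha,\beta}(\mathbb{R}^{d})}.
\]

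The main point to get right is the combinatorial bookkeeping of the multiplicity: one must check that the shells $\mathcal{B}_\ell$ are pairwise disjoint (which follows from their definition via the dyadic bands $2^\ell\le|x_k|<2^{\ell+1}$), so that the only overlaps among the sets $(\mathcal{B}_\ell\cup\mathcal{B}_{\ell+1})^2$ come from a given diagonal block $\mathcal{B}_\ell\times\mathcal{B}_\ell$ being shared between two consecutive pairs, giving the clean constant $2$. I do not anticipate a genuine obstacle here — it is an elementary ``finite overlap / telescoping of domains'' argument — but one should be slightly careful that the statement in the lemma writes $[u]\in W^{s,p,\alpha,\beta}(\mathbb{R})$ (presumably a typo for $\mathbb{R}^d$) and that no issue of finiteness arises, which is handled by the right-hand side being assumed finite (otherwise the inequality is trivial). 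A remark worth noting: one could alternatively only claim the weaker bound with constant depending on the number of overlapping neighbours, but the dyadic nested structure makes $2$ exactly achievable.
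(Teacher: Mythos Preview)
Your argument is correct and amounts to the same bounded-overlap observation the paper makes. The paper's version separates the sum into even-$\ell$ and odd-$\ell$ indices, so that within each subfamily the sets $\mathcal{B}_\ell\cup\mathcal{B}_{\ell+1}$ are pairwise disjoint and hence each partial sum is at most $[u]^p_{W^{s,p,\alpha,\beta}(\mathbb{R}^d)}$; your direct multiplicity count in the product space $\mathbb{R}^d\times\mathbb{R}^d$ is an equivalent formalization and yields the same constant $2$.
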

\begin{proof}
   Consider two families of sets:
\begin{equation*}
  \mathcal{E}:=  \left\{ \mathcal{B}_{\ell} \cup \mathcal{B}_{\ell+1} : |\ell| \ \text{is even and} \ \ell \leq n_{0}  \right\}
\end{equation*}
and
\begin{equation*}
  \mathcal{O}:=  \left\{ \mathcal{B}_{\ell} \cup \mathcal{B}_{\ell+1} : |\ell| \ \text{is odd and} \ \ell \leq n_{0}  \right\}.
\end{equation*}
Then ~$\mathcal{E}$ and ~$\mathcal{O}$ are collection of mutually disjoint sets respectively. Define
\begin{equation*}
    \mathcal{F}_{e} := \bigcup_{\substack{\ell =m \\ |\ell| \ \text{is even}}}^{n_{0}} \mathcal{B}_{\ell} \cup \mathcal{B}_{\ell+1} \hspace{3mm} \text{and} \hspace{3mm} \mathcal{F}_{o} := \bigcup_{\substack{\ell =m \\ |\ell| \ \text{is odd}}}^{n_{0}} \mathcal{B}_{\ell} \cup \mathcal{B}_{\ell+1}.
\end{equation*} 
Therefore, we have
\begin{eqnarray}\label{sum Ak A(k+1)}
    \sum_{\ell= m}^{n_{0}} [u]^{p}_{W^{s,p, \alpha, \beta}(\mathcal{B}_{\ell} \cup \mathcal{B}_{\ell+1})} &=& \sum_{\substack{\ell =m \\ |\ell| \ \text{is even}}}^{n_{0}} [u]^{p}_{W^{s,p, \alpha, \beta}(\mathcal{B}_{\ell} \cup \mathcal{B}_{\ell+1})} + \sum_{\substack{\ell =m \\ |\ell| \ \text{is odd}}}^{n_{0}} [u]^{p}_{W^{s,p, \alpha, \beta}(\mathcal{B}_{\ell} \cup \mathcal{B}_{\ell+1})} \nonumber \\ &\leq& [u]^{p}_{W^{s,p, \alpha, \beta}(\mathcal{F}_{e})} + [u]^{p}_{W^{s,p, \alpha, \beta}(\mathcal{F}_{o})} \leq 2 [u]^{p}_{W^{s,p, \alpha, \beta}(\mathbb{R}^{d})}.
\end{eqnarray}
This establishes the desired inequality. 
\end{proof}

\smallskip

We will now prove Theorem \ref{theorem1} for the case when $sp \neq k+ \alpha+ \beta$. We first address the case $sp<k+ \alpha+ \beta$ establishing Theorem \ref{theorem1}, followed by the case $sp> k+ \alpha+ \beta$ which will be treated seperately. The next subsection focuses on proving Theorem \ref{theorem1} for the case $sp<k+ \alpha+ \beta$. We assume $K = \{x= (x_{k}, x_{d-k}) \in \mathbb{R}^{k} \times \mathbb{R}^{d-k} : x_{k}=0 \}$.

\subsection{Proof of Theorem \ref{theorem1} when \texorpdfstring{$sp<k+\alpha+\beta$}{sp<k}}
Let ~$u \in C^{1}_{c}(\mathbb{R}^{d} \backslash K)$ such that ~$supp(u) \subset B^{k}(0, 2^{n_{0}+1}) \times (-2^{n_{1}},2^{n_{1}})^{d-k} = \mathcal{D}$ for some ~$n_{0}, n_{1} \in \mathbb{Z}$ satisfying ~$2^{n_{1}} \geq 2^{n_{0}+1}$. From Lemma ~\ref{sumineqlemma}, we have
\begin{equation*}
     \int_{\mathcal{B}^{i}_{\ell}} \frac{|u(x)|^{p}}{|x_{k}|^{sp-\alpha-\beta}}  dx \leq C [u]^{p}_{W^{s, p, \alpha, \beta}(\mathcal{B}^{i}_{\ell})} + C 2^{\ell (d-sp+ \alpha+ \beta)} |(u)_{\mathcal{B}^{i}_{\ell}}|^{p}  .
\end{equation*}
By summing the above inequality from ~$i=1$ to ~$\sigma_{\ell}$, we get
\begin{eqnarray*}
    \int_{\mathcal{B}_{\ell}} \frac{|u(x)|^{p}}{|x_{k}|^{sp-\alpha-\beta}}  dx &\leq& C   \sum_{i=1}^{\sigma_{\ell}} [u]^{p}_{W^{s, p, \alpha, \beta}(\mathcal{B}^{i}_{\ell})} + C 2^{\ell (d-sp+ \alpha+ \beta)}  \sum_{i=1}^{\sigma_{\ell}} |(u)_{\mathcal{B}^{i}_{\ell}}|^{p}  \\
    &\leq& C [u]^{p}_{W^{s, p, \alpha, \beta}(\mathcal{B}_{\ell})} +  C 2^{\ell (d-sp+ \alpha+ \beta)}  \sum_{i=1}^{\sigma_{\ell}} |(u)_{\mathcal{B}^{i}_{\ell}}|^{p}.
\end{eqnarray*}
Again, summing the above inequality from ~$\ell=m \in \mathbb{Z}^{-}$ to ~$n_{0}$, we obtain
\begin{equation}\label{eqnn1}
\sum_{\ell=m}^{n_{0}} \int_{\mathcal{B}_{\ell}} \frac{|u(x)|^{p}}{|x_{k}|^{sp-\alpha-\beta}}  dx \leq  C \sum_{\ell=m}^{n_{0}} [u]^{p}_{W^{s, p, \alpha, \beta}(\mathcal{B}_{\ell})} + C \sum_{\ell=m}^{n_{0}} 2^{\ell (d-sp+ \alpha+ \beta)} \sum_{i=1}^{\sigma_{\ell}} |(u)_{\mathcal{B}^{i}_{\ell}}|^{p}  .
\end{equation} 
Let ~$\mathcal{B}^{j}_{\ell+1}$ be such that it satisfies ~\eqref{codn on Al and Al+1}. Using triangle inequality, we have 
\begin{equation*}
    |(u)_{\mathcal{B}^{i}_{\ell}}|^p \leq  \left( |(u)_{\mathcal{B}^{j}_{\ell+1}}| + |(u)_{\mathcal{B}^{i}_{\ell}} - (u)_{\mathcal{B}^{j}_{\ell+1}}| \right)^p.
\end{equation*}
We know that $sp<k+ \alpha+\beta$, applying Lemma ~\ref{estimate} with ~$c :=c_{1}2^{k - sp+ \alpha+ \beta}>1$ where ~$c_{1} = \frac{2}{1+ 2^{k-sp+ \alpha+ \beta}} <1$ together with Lemma ~\ref{est2}, we obtain
\begin{equation*}
    |(u)_{\mathcal{B}^{i}_{\ell}}|^{p} \leq c_{1} 2^{k- sp+ \alpha+ \beta} |(u)_{\mathcal{B}^{j}_{\ell+1}}|^{p} + C  2^{\ell \left(sp-d \right)} [u]^{p}_{W^{s, p}(\mathcal{B}^{i}_{\ell} \cup \mathcal{B}^{j}_{\ell+1})}  .
\end{equation*}
Multiplying the above inequality by ~$2^{\ell (d-sp+ \alpha+ \beta)}$, we get
\begin{equation*}
   2^{\ell (d-sp+ \alpha+ \beta)} |(u)_{\mathcal{B}^{i}_{\ell}}|^{p} \leq c_{1} 2^{(\ell+1)(d-sp+ \alpha+ \beta) + (k-d)}|(u)_{\mathcal{B}^{j}_{\ell+1}}|^{p} + C 2^{\ell (\alpha+ \beta)} [u]^{p}_{W^{s, p}(\mathcal{B}^{i}_{\ell} \cup \mathcal{B}^{j}_{\ell+1})}  .
\end{equation*}
Since, there are ~$2^{d-k}$ such ~$\mathcal{B}^{i}_{\ell}$'s that satisfies ~\eqref{codn on Al and Al+1}. Therefore, summing the above inequality from ~$i=2^{d-k}(j-1)+1$ to $2^{d-k}j$ and using the fact $2^{\ell (\alpha+ \beta)} \leq C |x_{k}|^{\alpha}|y_{k}|^{\beta}$ for some positive constant $C$ depends on $\alpha$ and $\beta$, and for all $(x_{k}, x_{d-k}), (y_{k}, y_{d-k}) \in \mathcal{B}^{i}_{\ell} \cup \mathcal{B}^{j}_{\ell+1}$, we arrive
\begin{eqnarray*}
     2^{\ell (d-sp+ \alpha+ \beta)}  \sum_{i=2^{d-k}(j-1)+1}^{2^{d-k}j} |(u)_{\mathcal{B}^{i}_{\ell}}|^{p} &\leq& c_{1}  2^{(\ell+1)(d-sp+\alpha+ \beta)} |(u)_{\mathcal{B}^{j}_{\ell+1}}|^{p} \\ && \hspace{3mm}
       + \ C  \sum_{i=2^{d-k}(j-1)+1}^{2^{d-k}j} [u]^{p}_{W^{s, p, \alpha+ \beta}(\mathcal{B}^{i}_{\ell} \cup \mathcal{B}^{j}_{\ell+1})}  .
\end{eqnarray*}
Again, summing the above inequality from ~$j=1$ to ~$\sigma_{\ell+1}$, using the fact that
\begin{equation}\label{summation relation}
  \sum_{j=1}^{\sigma_{\ell+1}} \Bigg(  \sum_{i=2^{d-k}(j-1)+1}^{2^{d-k}j}  |(u)_{\mathcal{B}^{i}_{\ell}}|^{p} \Bigg)  = \sum_{i=1}^{\sigma_{\ell}}  |(u)_{\mathcal{B}^{i}_{\ell}}|^{p},
\end{equation}
we obtain
\begin{eqnarray*}\label{ineqn100}
2^{\ell (d-sp+ \alpha+ \beta)}  \sum_{i=1}^{\sigma_{\ell}} |(u)_{\mathcal{B}^{i}_{\ell}}|^{p} &\leq& c_{1} 2^{(\ell+1)(d-sp+ \alpha+ \beta)} \sum_{j=1}^{\sigma_{\ell+1}} |(u)_{\mathcal{B}^{j}_{\ell+1}}|^{p} \\ && \hspace{3mm}   + \ C \sum_{j=1}^{\sigma_{\ell+1}} \Bigg( \sum_{i=2^{d-k}(j-1)+1}^{2^{d-k}j}  [u]^{p}_{W^{s, p, \alpha, \beta}(\mathcal{B}^{i}_{\ell} \cup \mathcal{B}^{j}_{\ell+1})} \Bigg) \\ &\leq& c_{1} 2^{(\ell+1)(d-sp+ \alpha+ \beta)} \sum_{j=1}^{\sigma_{\ell+1}} |(u)_{\mathcal{B}^{j}_{\ell+1}}|^{p} + C  [u]^{p}_{W^{s, p, \alpha, \beta}(\mathcal{B}_{\ell} \cup \mathcal{B}_{\ell+1})} .
\end{eqnarray*}
For simplicity let ~$\mathcal{A}_{\ell} = \sum_{i=1}^{\sigma_{\ell}} |(u)_{\mathcal{B}^{i}_{\ell}}|^{p}$. Then, the above inequality will become
\begin{equation*}
    2^{\ell (d-sp+ \alpha+ \beta)} \mathcal{A}_{\ell} \leq c_{1} 2^{(\ell+1)(d-sp+ \alpha+ \beta)} \mathcal{A}_{\ell+1} + C  [u]^{p}_{W^{s, p, \alpha, \beta}(\mathcal{B}_{\ell} \cup \mathcal{B}_{\ell+1})}  .
\end{equation*}
Summing the above inequality from ~$\ell=m \in \mathbb{Z}^{-}$ to ~$n_{0}$, we get
\begin{equation*}
   \sum_{\ell=m}^{n_{0}} 2^{\ell (d-sp+ \alpha+ \beta)} \mathcal{A}_{\ell} \leq \sum_{\ell=m}^{n_{0}} c_{1} 2^{(\ell+1)(d-sp+\alpha+\beta)} \mathcal{A}_{\ell+1} + C  \sum_{\ell=m}^{n_{0}} [u]^{p}_{W^{s, p, \alpha, \beta}(\mathcal{B}_{\ell} \cup \mathcal{B}_{\ell+1})}  .
\end{equation*}
By changing sides, rearranging, and re-indexing, we get
\begin{equation*}
   2^{m(d-sp+ \alpha+ \beta)} \mathcal{A}_{m} + (1-c_{1}) \sum_{\ell=m+1}^{n_{0}} 2^{\ell (d-sp+ \alpha+ \beta)} \mathcal{A}_{\ell}  \leq  C \mathcal{A}_{n_{0}+1} 
    +  C \sum_{\ell=m}^{n_{0}} [u]^{p}_{W^{s, p, \alpha, \beta}(\mathcal{B}_{\ell} \cup \mathcal{B}_{\ell+1})}.
\end{equation*}
Choose ~$-m$ sufficiently large so that ~$|(u)_{\mathcal{B}^{j}_{m}}|=0$ for all ~$j \in \{ 1, \dots, \sigma_{m} \}$. Since $supp(u) \subset \mathcal{D}$, this implies that $|(u)_{\mathcal{B}^{j}_{n_{0}+1}}|=0$ for all ~$j \in \{ 1, \dots, \sigma_{n_{0}+1} \}$. Hence, we have for some constant ~$C= C(d,p,s,k, \alpha, \beta)>0$,
\begin{equation*}
    (1-c_{1}) \sum_{\ell=m}^{n_{0}} 2^{\ell (d-sp+ \alpha+ \beta)} \mathcal{A}_{\ell} \leq    C \sum_{\ell=m}^{n_{0}} [u]^{p}_{W^{s, p, \alpha, \beta}(\mathcal{B}_{\ell} \cup \mathcal{B}_{\ell+1})}  .
\end{equation*}
Putting the value of ~$\mathcal{A}_{\ell}$ in the above inequality, we obtain
\begin{equation}\label{eqnn99}
 (1-c_{1})   \sum_{\ell=m}^{n_{0}} 2^{\ell (d-sp+ \alpha+ \beta)} \sum_{i=1}^{\sigma_{\ell}} |(u)_{\mathcal{B}^{i}_{\ell}}|^{p} \leq  C \sum_{\ell=m}^{n_{0}} [u]^{p}_{W^{s, p, \alpha, \beta}(\mathcal{B}_{\ell} \cup \mathcal{B}_{\ell+1})}  .
\end{equation}
Combining ~\eqref{eqnn1} and ~\eqref{eqnn99} yields
\begin{equation*}
    \sum_{\ell=m}^{n_{0}} \int_{\mathcal{B}_{\ell}} \frac{|u(x)|^{p}}{|x_{k}|^{sp- \alpha- \beta} }  dx \leq  C \sum_{\ell=m}^{n_{0}} [u]^{p}_{W^{s, p, \alpha, \beta}(\mathcal{B}_{\ell} \cup \mathcal{B}_{\ell+1})}  .
\end{equation*}
Therefore, from Lemma \ref{lemma 3}, we have
 \begin{equation*}
       \bigintssss_{\mathbb{R}^{d}} \frac{|u(x)|^{p}}{|x_{k}|^{sp- \alpha- \beta} }  dx  \leq C  [u]^{p}_{W^{s, p, \alpha, \beta}(\mathbb{R}^{d})} .
 \end{equation*}
This finishes the proof of Theorem \ref{theorem1} when $sp<k+\alpha+\beta$.

\subsection{Proof of Theorem \ref{theorem1} when \texorpdfstring{$sp>k+\alpha+\beta$}{sp>k}}
Let ~$u \in C^{1}_{c}(\mathbb{R}^{d} \backslash K)$ such that ~$supp(u) \subset B^{k}(0, 2^{n_{0}+1}) \times (-2^{n_{1}},2^{n_{1}})^{d-k} = \mathcal{D}$ for some ~$n_{0}, n_{1} \in \mathbb{Z}$ satisfying ~$2^{n_{1}} \geq 2^{n_{0}+1}$. Let ~$\mathcal{B}^{j}_{\ell+1}$ be such that it satisfies ~\eqref{codn on Al and Al+1}. Using triangle inequality, we have 
\begin{equation*}
    |(u)_{\mathcal{B}^{j}_{\ell+1}}|^p \leq  \left( |(u)_{\mathcal{B}^{i}_{\ell}}| + |(u)_{\mathcal{B}^{i}_{\ell}} - (u)_{\mathcal{B}^{j}_{\ell+1}}| \right)^p.
\end{equation*}
We know that $sp>k+\alpha+ \beta$, applying Lemma ~\ref{estimate} with ~$c :=c_{1}2^{sp-\alpha-\beta-k}>1$ where ~$c_{1} = \frac{2}{1+ 2^{sp-\alpha-\beta-k}} <1$ together with Lemma ~\ref{est2}, we obtain
\begin{equation*}
    |(u)_{\mathcal{B}^{j}_{\ell+1}}|^{p} \leq c_{1} 2^{sp-\alpha-\beta-k} |(u)_{\mathcal{B}^{i}_{\ell}}|^{p} + C  2^{\ell(sp-d) } [u]^{p}_{W^{s, p}(\mathcal{B}^{i}_{\ell} \cup \mathcal{B}^{j}_{\ell+1})}  .
\end{equation*}
Multiplying the above inequality by ~$2^{(\ell+1)(d-sp+\alpha+ \beta)-d+k}$, we get
\begin{equation*}
   2^{(\ell+1)(d-sp+\alpha+ \beta)-d+k} |(u)_{\mathcal{B}^{j}_{\ell+1}}|^{p} \leq c_{1} 2^{\ell (d-sp+\alpha+ \beta)}|(u)_{\mathcal{B}^{i}_{\ell}}|^{p} + C 2^{\ell(\alpha+ \beta)} [u]^{p}_{W^{s, p}(\mathcal{B}^{i}_{\ell} \cup \mathcal{B}^{j}_{\ell+1})}  .
\end{equation*}
Since, there are ~$2^{d-k}$ such ~$\mathcal{B}^{i}_{\ell}$'s that satisfies ~\eqref{codn on Al and Al+1}. Therefore, summing the above inequality from ~$i=2^{d-k}(j-1)+1$ to ~$2^{d-k}j$ and using the fact $2^{\ell (\alpha+ \beta)} \leq C |x_{k}|^{\alpha}|y_{k}|^{\beta}$ for some positive constant $C$ depends on $\alpha$ and $\beta$, and for all $(x_{k}, x_{d-k}), (y_{k}, y_{d-k}) \in \mathcal{B}^{i}_{\ell} \cup \mathcal{B}^{j}_{\ell+1}$, we obtain
\begin{eqnarray*}
    2^{(\ell+1)(d-sp+\alpha+ \beta)}  |(u)_{\mathcal{B}^{j}_{\ell+1}}|^{p} &\leq& c_{1} 2^{\ell (d-sp+\alpha+ \beta)} \sum_{i=2^{d-k}(j-1)+1}^{2^{d-k}j}  |(u)_{\mathcal{B}^{i}_{\ell}}|^{p} \\ && \hspace{3mm}
       +  \ C  \sum_{i=2^{d-k}(j-1)+1}^{2^{d-k}j} [u]^{p}_{W^{s, p, \alpha, \beta}(\mathcal{B}^{i}_{\ell} \cup \mathcal{B}^{j}_{\ell+1})}  .
\end{eqnarray*}
Again, summing the above inequality from ~$j=1$ to ~$\sigma_{\ell+1}$ and using ~\eqref{summation relation}, we obtain
\begin{eqnarray*}\label{ineqn1}
2^{(\ell+1)(d-sp+\alpha+ \beta)}  \sum_{j=1}^{\sigma_{\ell+1}}  |(u)_{\mathcal{B}^{j}_{\ell+1}}|^{p} &\leq& c_{1} 2^{\ell (d-sp+\alpha+ \beta)} 
\sum_{i=1}^{\sigma_{\ell}} |(u)_{\mathcal{B}^{i}_{\ell}}|^{p} \nonumber \\ && \hspace{2mm}
  + \ C  \sum_{j=1}^{\sigma_{\ell+1}} \Bigg( \sum_{i=2^{d-k}(j-1)+1}^{2^{d-k}j}  [u]^{p}_{W^{s, p, \alpha, \beta}(\mathcal{B}^{i}_{\ell} \cup \mathcal{B}^{j}_{\ell+1})} \Bigg) \nonumber \\ &\leq& c_{1} 2^{\ell (d-sp+\alpha+ \beta)} \sum_{i=1}^{\sigma_{\ell}} |(u)_{\mathcal{B}^{i}_{\ell}}|^{p} + C  [u]^{p}_{W^{s, p, \alpha, \beta}(\mathcal{B}_{\ell} \cup \mathcal{B}_{\ell+1})}   .
\end{eqnarray*}
Summing the above inequality from ~$\ell=m \in \mathbb{Z}^{-}$ to ~$n_{0}-1$, we get
\begin{eqnarray*}
   \sum_{\ell=m}^{n_{0}-1} 2^{(\ell+1)(d-sp+ \alpha+ \beta)}  \sum_{j=1}^{\sigma_{\ell+1}}  |(u)_{\mathcal{B}^{j}_{\ell+1}}|^{p} &\leq& c_{1} \sum_{\ell=m}^{n_{0}-1}  2^{\ell (d-sp+ \alpha+ \beta)} \sum_{i=1}^{\sigma_{\ell}} |(u)_{\mathcal{B}^{i}_{\ell}}|^{p} \\ && \hspace{3mm} + \  C  \sum_{\ell=m}^{n_{0}-1} [u]^{p}_{W^{s, p, \alpha, \beta}(\mathcal{B}_{\ell} \cup \mathcal{B}_{\ell+1})}  .
\end{eqnarray*}
By changing sides, rearranging, and re-indexing, we get
\begin{eqnarray*}
 (1-c_{1})   \sum_{\ell=m+1}^{n_{0}} 2^{\ell (d-sp+\alpha+ \beta)} \sum_{i=1}^{\sigma_{\ell}} |(u)_{\mathcal{B}^{i}_{\ell}}|^{p} &\leq & 2^{m(d- sp+\alpha+ \beta)} \sum_{j=1}^{\sigma_{m}} |(u)_{\mathcal{B}^{j}_{m}}|^{p} \\ && \hspace{3mm} + \ C \sum_{\ell=m}^{n_{0}-1} [u]^{p}_{W^{s, p, \alpha, \beta}(\mathcal{B}_{\ell} \cup \mathcal{B}_{\ell+1})}  .
\end{eqnarray*}
Choose ~$-m$ sufficiently large so that ~$|(u)_{\mathcal{B}^{j}_{m}}|=0$ for all ~$j \in \{ 1, \dots, \sigma_{m} \}$. Therefore, we have
\begin{equation}\label{eqnn}
    (1-c_{1})   \sum_{\ell=m}^{n_{0}} 2^{\ell (d-sp+\alpha + \beta)} \sum_{i=1}^{\sigma_{\ell}} |(u)_{\mathcal{B}^{i}_{\ell}}|^{p} \leq C \sum_{\ell=m}^{n_{0}-1} [u]^{p}_{W^{s, p, \alpha, \beta}(\mathcal{B}_{\ell} \cup \mathcal{B}_{\ell+1})}  .
\end{equation}
Combining ~\eqref{eqnn1} and ~\eqref{eqnn} yields
\begin{equation*}
    \sum_{\ell=m}^{n_{0}} \int_{\mathcal{B}_{\ell}} \frac{|u(x)|^{p}}{|x_{k}|^{sp-\alpha- \beta} }  dx \leq   C \sum_{\ell=m}^{n_{0}} [u]^{p}_{W^{s, p, \alpha, \beta}(\mathcal{B}_{\ell} \cup \mathcal{B}_{\ell+1})}  .
\end{equation*}
Therefore, from Lemma \ref{lemma 3}, we have
 \begin{equation*}
      \int_{\mathbb{R}^{d}} \frac{|u(x)|^{p}}{|x_{k}|^{sp- \alpha- \beta} }  dx  \leq C [u]^{p}_{W^{s, p, \alpha, \beta}(\mathbb{R}^{d})}.
 \end{equation*}
This finishes the proof of Theorem \ref{theorem1} when $sp>k+ \alpha+ \beta$.

\bigskip

In the following subsection, we will establish Theorem ~\ref{theorem2}, which represents the critical case of Theorem \ref{theorem1}, i.e.,  for the case $sp=k+\alpha+\beta$.  

 \subsection{Proof of Theorem \ref{theorem2}} 
 In this case $sp= k + \alpha+ \beta$. Let ~$u \in C^{1}_{c}(\mathbb{R}^{d} \backslash K)$ such that $supp(u) \subset B^{k}(0, R) \times \mathbb{R}^{d-k}$ for some $R>0$. Choose $n_{0}, ~ n_{1} \in \mathbb{Z}$ such that $2^{n_{0}} \leq R < 2^{n_{0}+1}$ and $2^{n_{1}} \geq 2^{n_{0}+1}$. Therefore, $supp(u) \subset B^{k}(0,2^{n_{0}+1}) \times (-2^{n_{1}}, 2^{n_{1}})^{d-k} = \mathcal{D}$. Since $2^{n_{0}}\leq R < 2^{n_{0}+1}$ and for each ~$x= (x_{k}, x_{d-k}) \in \mathcal{B}^{i}_{\ell}$, we have ~$|x_{k}| < 2^{\ell +1}$ which  implies ~$ \ln \left(\frac{4R}{|x_{k}|} \right) > (n_{0}-\ell+1) \ln 2$. Using this together with Lemma ~\ref{sumineqlemma} and ~$\frac{1}{(n_{0}-\ell+1)^{p}} \leq 1$, we obtain
\begin{eqnarray*}
    \bigintssss_{\mathcal{B}^{i}_{\ell}} \frac{|u(x)|^{p}}{|x_{k}|^{k} \ln^{p} \left(\frac{4R}{|x_{k}|} \right)}  dx &\leq& \frac{C}{(n_{0}-\ell+1)^{p}} [u]^{p}_{W^{s, p, \alpha, \beta}(\mathcal{B}^{i}_{\ell})} + C \frac{2^{\ell(d-k)}}{(n_{0}-\ell+1)^{p}} |(u)_{\mathcal{B}^{i}_{\ell}}|^{p} \\ 
     &\leq&  C [u]^{p}_{W^{s, p, \alpha, \beta}(\mathcal{B}^{i}_{\ell})} + C \frac{2^{\ell(d-k)}}{(n_{0}-\ell+1)^{p}} |(u)_{\mathcal{B}^{i}_{\ell}}|^{p}  .
\end{eqnarray*}
Summing the above inequality from ~$i=1$ to ~$\sigma_{\ell}$, we get
\begin{eqnarray*}
    \bigintssss_{\mathcal{B}_{\ell}} \frac{|u(x)|^{p}}{|x_{k}|^{k} \ln^{p} \left( \frac{4R}{|x_{k}|} \right)}  dx &\leq& C   \sum_{i=1}^{\sigma_{\ell}} [u]^{p}_{W^{s, p, \alpha, \beta}(\mathcal{B}^{i}_{\ell})} + C \frac{2^{\ell (d-k)}}{(n_{0}-\ell+1)^{p}}  \sum_{i=1}^{\sigma_{\ell}} |(u)_{\mathcal{B}^{i}_{\ell}}|^{p} \\ &\leq& C [u]^{p}_{W^{s, p, \alpha, \beta}(\mathcal{B}_{\ell})} + C \frac{2^{\ell (d-k)}}{(n_{0}-\ell+1)^{p}}  \sum_{i=1}^{\sigma_{\ell}} |(u)_{\mathcal{B}^{i}_{\ell}}|^{p}.
\end{eqnarray*}
Again, summing the above inequality from ~$\ell =m \in \mathbb{Z}^{-}$ to ~$n_{0}$, we get
\begin{equation}\label{eqnn2}
\sum_{\ell=m}^{n_{0}} \bigintssss_{\mathcal{B}_{\ell}} \frac{|u(x)|^{p}}{|x_{k}|^{k} \ln^{p}\left( \frac{4R}{|x_{k}|} \right)}  dx \leq  C \sum_{\ell=m}^{n_{0}} [u]^{p}_{W^{s, p, \alpha, \beta}(\mathcal{B}_{\ell})} + C \sum_{\ell=m}^{n_{0}} \frac{2^{\ell (d-k)}}{(n_{0}-\ell+1)^{p}}  \sum_{i=1}^{\sigma_{\ell}} |(u)_{\mathcal{B}^{i}_{\ell}}|^{p}  .
\end{equation}
Let ~$\mathcal{B}^{j}_{\ell+1}$ be such that it satisfies ~\eqref{codn on Al and Al+1}. Using triangle inequality, we have 
\begin{equation*}
    |(u)_{\mathcal{B}^{i}_{\ell}}|^p \leq  \left( |(u)_{\mathcal{B}^{j}_{\ell+1}}| + |(u)_{\mathcal{B}^{i}_{\ell}} - (u)_{\mathcal{B}^{j}_{\ell+1}}| \right)^p.
\end{equation*}
For ~$\ell \leq n_{0}$, applying Lemma ~\ref{estimate} with ~$c := \left( \frac{n_{0}-\ell+1}{n_{0}-\ell+(1/2)} \right)^{p-1}>1$ together with Lemma ~\ref{est2} and ~$sp=k+ \alpha+ \beta$ and using the fact using the fact $2^{\ell (\alpha+ \beta)} \leq C |x_{k}|^{\alpha}|y_{k}|^{\beta}$ for some positive constant $C$ depends on $\alpha$ and $\beta$, and for all $(x_{k}, x_{d-k}), (y_{k}, y_{d-k}) \in \mathcal{B}^{i}_{\ell} \cup \mathcal{B}^{j}_{\ell+1}$, we obtain
\begin{eqnarray*}
    |(u)_{\mathcal{B}^{i}_{\ell}}|^{p} &\leq& \left( \frac{n_{0}-\ell+1}{n_{0}-\ell+(1/2)} \right)^{p-1} |(u)_{\mathcal{B}^{j}_{\ell+1}}|^{p} + C (n_{0}-\ell+1)^{p-1} 2^{\ell(k+ \alpha+ \beta-d) } [u]^{p}_{W^{s, p}(\mathcal{B}^{i}_{\ell} \cup \mathcal{B}^{j}_{\ell+1})}  \\ &\leq& \left( \frac{n_{0}-\ell+1}{n_{0}-\ell+(1/2)} \right)^{p-1} |(u)_{\mathcal{B}^{j}_{\ell+1}}|^{p} + C (n_{0}-\ell+1)^{p-1} 2^{\ell(k-d) } [u]^{p}_{W^{s, p, \alpha, \beta}(\mathcal{B}^{i}_{\ell} \cup \mathcal{B}^{j}_{\ell+1})} .
\end{eqnarray*}
Multiplying the above inequality by ~$2^{\ell (d-k)}$, we get
\begin{equation*}
    \frac{2^{\ell (d-k)}}{(n_{0}-\ell+1)^{p-1}}  |(u)_{\mathcal{B}^{i}_{\ell}}|^{p} \leq \frac{2^{\ell (d-k)}}{ (n_{0}-\ell+(1/2))^{p-1}} |(u)_{\mathcal{B}^{j}_{\ell+1}}|^{p} + C [u]^{p}_{W^{s, p, \alpha, \beta}(\mathcal{B}^{i}_{\ell} \cup \mathcal{B}^{j}_{\ell+1})}  .
\end{equation*}
Since, there are ~$2^{d-k}$ such ~$\mathcal{B}^{i}_{\ell}$'s that satisfies ~\eqref{codn on Al and Al+1}. Therefore, summing the above inequality from ~$i=2^{d-k}(j-1)+1$ to ~$2^{d-k}j$, we obtain
\begin{eqnarray*}
      \frac{2^{\ell (d-k)}}{(n_{0}-\ell+1)^{p-1}}  \sum_{i=2^{d-k}(j-1)+1}^{2^{d-k}j} |(u)_{\mathcal{B}^{i}_{\ell}}|^{p} &\leq&  \frac{2^{(\ell+1) (d-k)}}{ (n_{0}-\ell+(1/2))^{p-1}} |(u)_{\mathcal{B}^{j}_{\ell+1}}|^{p} \\ && \hspace{3mm}
       + \ C  \sum_{i=2^{d-k}(j-1)+1}^{2^{d-k}j} [u]^{p}_{W^{s, p, \alpha, \beta}(\mathcal{B}^{i}_{\ell} \cup \mathcal{B}^{j}_{\ell+1})}  .
\end{eqnarray*}
Again, summing the above inequality from ~$j=1$ to ~$\sigma_{\ell+1}$ and using ~\eqref{summation relation}, we obtain
\begin{eqnarray*}
\frac{2^{\ell(d-k)}}{(n_{0}-\ell+1)^{p-1}}  \sum_{i=1}^{\sigma_{\ell}} |(u)_{\mathcal{B}^{i}_{\ell}}|^{p} &\leq& \frac{2^{(\ell+1)(d-k)}}{ (n_{0}-\ell+(1/2))^{p-1}} \sum_{j=1}^{\sigma_{\ell+1}} |(u)_{\mathcal{B}^{j}_{\ell+1}}|^{p} \\ && \hspace{3mm}
  + \ C \sum_{j=1}^{\sigma_{\ell+1}} \Bigg( \sum_{i=2^{d-k}(j-1)+1}^{2^{d-k}j}  [u]^{p}_{W^{s, p, \alpha, \beta}(\mathcal{B}^{i}_{\ell} \cup \mathcal{B}^{j}_{\ell+1})} \Bigg) \\ &\leq& \frac{2^{(\ell+1)(d-k)}}{ (n_{0}-\ell+(1/2))^{p-1}} \sum_{j=1}^{\sigma_{\ell+1}} |(u)_{\mathcal{B}^{j}_{\ell+1}}|^{p} + C  [u]^{p}_{W^{s, p, \alpha, \beta}(\mathcal{B}_{\ell} \cup \mathcal{B}_{\ell+1})} .
\end{eqnarray*}
For simplicity let ~$\mathcal{A}_{\ell} = \sum_{i=1}^{\sigma_{\ell}} |(u)_{\mathcal{B}^{i}_{\ell}}|^{p}$. Then, the above inequality will become
\begin{equation*}
    \frac{2^{\ell(d-k)}}{(n_{0}-\ell+1)^{p-1}} \mathcal{A}_{\ell} \leq \frac{2^{(\ell+1)(d-k)}}{ (n_{0}-\ell+(1/2))^{p-1}} \mathcal{A}_{\ell+1} + C  [u]^{p}_{W^{s, p, \alpha, \beta}(\mathcal{B}_{\ell} \cup \mathcal{B}_{\ell+1})}  .
\end{equation*}
Summing the above inequality from ~$\ell=m \in \mathbb{Z}^{-}$ to ~$n_{0}$, we get
\begin{equation*}
   \sum_{\ell=m}^{n_{0}} \frac{2^{\ell(d-k)}}{(n_{0}-\ell+1)^{p-1}} \mathcal{A}_{\ell} \leq \sum_{\ell=m}^{n_{0}} \frac{2^{(\ell+1)(d-k)}}{ (n_{0}-\ell+(1/2))^{p-1}} \mathcal{A}_{\ell+1} + C  \sum_{\ell=m}^{n_{0}} [u]^{p}_{W^{s, p, \alpha, \beta}(\mathcal{B}_{\ell} \cup \mathcal{B}_{\ell+1})}  .
\end{equation*}
By changing sides, rearranging, and re-indexing, we get
\begin{multline*}
    \frac{2^{m(d-k)}}{(n_{0}-m+1)^{p-1}} \mathcal{A}_{m} + \sum_{\ell=m+1}^{n_{0}} \left\{ \frac{1}{(n_{0}-\ell+1)^{p-1}} - \frac{1}{(n_{0}-\ell+3/2)^{p-1}} \right\} 2^{\ell(d-k)} \mathcal{A}_{\ell}  \\ \leq 2^{p-1} 2^{(n_{0}+1)(d-k)}  \mathcal{A}_{n_{0}+1} 
    + \ C \sum_{\ell=m}^{n_{0}} [u]^{p}_{W^{s, p, \alpha, \beta}(\mathcal{B}_{\ell} \cup \mathcal{B}_{\ell+1})}  .
\end{multline*}
Now, for large values of ~$-\ell$, we have
\begin{equation*}
     \frac{1}{(n_{0}-\ell+1)^{p-1}} - \frac{1}{(n_{0}-\ell+3/2)^{p-1}} \sim \frac{1}{(n_{0}-\ell+1)^{p}},
\end{equation*}
and choose ~$-m$ sufficiently large so that ~$|(u)_{\mathcal{B}^{j}_{m}}|=0$ for all ~$j \in \{ 1, \dots, \sigma_{m} \}$. Since $supp(u) \subset \mathcal{D}$, this implies that $|(u)_{\mathcal{B}^{j}_{n_{0}+1}}|=0$ for all ~$j \in \{ 1, \dots, \sigma_{n_{0}+1} \}$. Hence, we have for some constant ~$C= C(d,p,k, \alpha, \beta)>0$,
\begin{equation*}
    \sum_{\ell=m}^{n_{0}} \frac{2^{\ell(d-k)}}{(n_{0}-\ell+1)^{p}} \mathcal{A}_{\ell} \leq    C \sum_{\ell=m}^{n_{0}} [u]^{p}_{W^{s, p, \alpha, \beta}(\mathcal{B}_{\ell} \cup \mathcal{B}_{\ell+1})}  .
\end{equation*}
Putting the value of ~$\mathcal{A}_{\ell}$ in the above inequality, we obtain
\begin{equation}\label{eqn10}
    \sum_{\ell=m}^{n_{0}} \frac{2^{\ell(d-k)}}{(n_{0}-\ell+1)^{p}} \sum_{i=1}^{\sigma_{\ell}} |(u)_{\mathcal{B}^{i}_{\ell}}|^{p} \leq  C \sum_{\ell=m}^{n_{0}} [u]^{p}_{W^{s, p, \alpha, \beta}(\mathcal{B}_{\ell} \cup \mathcal{B}_{\ell+1})}  .
\end{equation}
Combining ~\eqref{eqnn2} and ~\eqref{eqn10} yields
\begin{equation*}\label{eqqnn}
    \sum_{\ell=m}^{n_{0}} \bigintssss_{\mathcal{B}_{\ell}} \frac{|u(x)|^{p}}{|x_{k}|^{k} \ln^{p} \left( \frac{4R}{|x_{k}|} \right)}  dx \leq  C \sum_{\ell=m}^{n_{0}} [u]^{p}_{W^{s, p, \alpha, \beta}(\mathcal{B}_{\ell} \cup \mathcal{B}_{\ell+1})}  .
\end{equation*}
Therefore, from Lemma \ref{lemma 3}, we have
 \begin{equation*}
       \bigintssss_{\mathbb{R}^{d}} \frac{|u(x)|^{p}}{|x_{k}|^{k} \ln^{p}\left( \frac{4R}{|x_{k}|} \right)}  dx \leq C [u]^{p}_{W^{s, p, \alpha, \beta}(\mathbb{R}^{d})}.
 \end{equation*}
This finishes the proof of Theorem \ref{theorem2}.

\bigskip

\noindent \textbf{Acknowledgement:} We extend our sincere thanks to the Department of Mathematics and Statistics at the Indian Institute of Technology Kanpur, India, for fostering a supportive research environment. V. Sahu gratefully acknowledges the financial support provided by the MHRD, Government of India, through the GATE fellowship. The author would like to thank Prof. Prosenjit Roy for useful discussion on the subject.


\begin{thebibliography}{10}

\bibitem{adimurthi2024boundaryfractionalhardysinequality}
Adimurthi, Purbita Jana, and Prosenjit Roy, \emph{Boundary fractional {H}ardy's inequality in dimension one: {T}he critical case}, arXiv: 2407.12098 [math.AP] (2024), 1--11.

\bibitem{adimurthi2024fractional}
Adimurthi, Prosenjit Roy, and Vivek Sahu, \emph{Fractional boundary {H}ardy inequality for the critical cases}, arXiv:2308.11956 [math.AP] (2024), 1--50.

\bibitem{adimurthiSubmanifold}
Adimurthi, Prosenjit Roy, and Vivek Sahu, \emph{Fractional {H}ardy inequality with singularity on submanifold}, arxiv:2407.10863 [math.AP] (2024), 1--34.

\bibitem{brasco2024}
Francesca Bianchi, Lorenzo Brasco, Firoj Sk, and Anna~Chiara Zagati, \emph{A note on the supersolution method for {H}ardy's inequality}, Rev. Mat. Complut. \textbf{37} (2024), no.~2, 323--340.

\bibitem{brasco2018}
Lorenzo Brasco and Eleonora Cinti, \emph{On fractional {H}ardy inequalities in convex sets}, Discrete Contin. Dyn. Syst. \textbf{38} (2018), no.~8, 4019--4040. \MR{3814363}

\bibitem{csato}
Xavier Cabr\'{e}, Matteo Cozzi, and Gyula Csat\'{o}, \emph{A fractional {M}ichael-{S}imon {S}obolev inequality on convex hypersurfaces}, Ann. Inst. H. Poincar\'{e} C Anal. Non Lin\'{e}aire \textbf{40} (2023), no.~1, 185--214.

\bibitem{Cabre2022}
Xavier Cabr\'{e} and Pietro Miraglio, \emph{Universal {H}ardy-{S}obolev inequalities on hypersurfaces of {E}uclidean space}, Commun. Contemp. Math. \textbf{24} (2022), no.~5, Paper No. 2150063, 25.

\bibitem{Dyda2004}
Bart{\l}omiej Dyda, \emph{A fractional order {H}ardy inequality}, Illinois J. Math. \textbf{48} (2004), no.~2, 575--588.

\bibitem{dyda2022}
Bart{\l}omiej Dyda and Micha{\l} Kijaczko, \emph{On density of compactly supported smooth functions in fractional {S}obolev spaces}, Ann. Mat. Pura Appl. (4) \textbf{201} (2022), no.~4, 1855--1867.

\bibitem{dyda2024}
Bart{\l}omiej Dyda and Micha{\l} Kijaczko, \emph{Sharp fractional {H}ardy inequalities with a remainder for {$1 < p < 2$}}, J. Funct. Anal. \textbf{286} (2024), no.~9, Paper No. 110373, 19.

\bibitem{dyda2022sharp}
Bart{\l}omiej Dyda and Micha{\l} Kijaczko, \emph{Sharp weighted fractional {H}ardy inequalities}, Studia Math. \textbf{274} (2024), no.~2, 153--171.

\bibitem{dyda2018}
Bart{\l}omiej Dyda, Juha Lehrb\"ack, and Antti~V. V\"ah\"akangas, \emph{Fractional {H}ardy-{S}obolev type inequalities for half spaces and {J}ohn domains}, Proc. Amer. Math. Soc. \textbf{146} (2018), no.~8, 3393--3402.

\bibitem{edmunds2022}
David~Eric Edmunds and W~Desmond Evans, \emph{Fractional sobolev spaces and inequalities}, vol. 230, Cambridge University Press, 2022.

\bibitem{frank2008}
Rupert~L. Frank and Robert Seiringer, \emph{Non-linear ground state representations and sharp {H}ardy inequalities}, J. Funct. Anal. \textbf{255} (2008), no.~12, 3407--3430. \MR{2469027}

\bibitem{Frank2010}
Rupert~L. Frank and Robert Seiringer, \emph{Sharp fractional {H}ardy inequalities in half-spaces}, Around the research of {V}ladimir {M}az'ya. {I}, Int. Math. Ser. (N. Y.), vol.~11, Springer, New York, 2010, pp.~161--167.

\bibitem{leonibook}
Giovanni Leoni, \emph{A first course in fractional {S}obolev spaces}, Graduate Studies in Mathematics, vol. 229, American Mathematical Society, Providence, RI, [2023] \copyright 2023.

\bibitem{squassina2018}
Hoai-Minh Nguyen and Marco Squassina, \emph{Fractional caffarelli-kohn-nirenberg inequalities}, J. Funct. Anal. \textbf{274} (2018), no.~9, 2661--2672.

\bibitem{firoz}
Firoj Sk, \emph{Characterization of fractional {S}obolev-{P}oincar\'e{} and (localized) {H}ardy inequalities}, J. Geom. Anal. \textbf{33} (2023), no.~7, Paper No. 223, 20.

\end{thebibliography}
\end{document}